\theoremstyle{plain}
\newtheorem{theorem}{Theorem}[]%[section]
\newtheorem{lemma}{Lemma}[]%[section]
\newtheorem{proposition}{Proposition}[]%[section]
\theoremstyle{definition}
\newtheorem{definition}{Definition}[]%[section]
\theoremstyle{remark}
\newtheorem{remark}{Remark}[]%[section]
\newcommand{\eqdistr}{\stackrel{d}{=}}
\renewcommand{\P}{\mathbb P}
\newcommand{\SSS}{\mathbb S}
\newcommand{\W}{\mathbb W}
\newcommand{\U}{\mathbb U}
\newcommand{\E}{\mathbb E}
\newcommand{\R}{\mathbb{R}}
\newcommand{\N}{\mathbb{N}}
\newcommand{\Z}{\mathbb{Z}}
\newcommand{\XXX}{\mathbb{X}}
\newcommand{\Cov}{\mathrm{Cov}}
\newcommand{\ka}{\kappa}
\newcommand{\Rect}{\boldsymbol{\mathfrak{R}}^d}
\newcommand{\RectDim}{\boldsymbol{\mathfrak{R}}^{d-1}}
\newcommand{\CRect}{\boldsymbol{\mathcal{R}}^d}
\newcommand{\DInt}{\boldsymbol{\mathcal{D}}}
\newcommand{\CRectm}{\boldsymbol{\mathcal{P}}}
\newcommand{\CRectmm}{\boldsymbol{\mathcal{Q}}}
\newcommand{\Cube}{\boldsymbol{\mathfrak{C}}^d}
\newcommand{\CCube}{\boldsymbol{\mathcal{C}}^d}
\newcommand{\Pcoll}{\boldsymbol{\mathfrak{P}}^d}
\newcommand{\Pcollone}{\boldsymbol{\mathfrak{P}}^1}
\newcommand{\Pcollm}{\boldsymbol{\mathfrak{Q}}^d}
\newcommand{\BB}{\mathcal{B}}
\newcommand{\RR}{\boldsymbol{\mathcal{I}}}
\newcommand{\DRR}{\mathfrak{I}}
\newcommand{\xx}{\boldsymbol{x}}
\newcommand{\hh}{\boldsymbol{h}}
\newcommand{\yy}{\boldsymbol{y}}
\newcommand{\zz}{\boldsymbol{z}}
\newcommand{\kk}{\boldsymbol{k}}
\newcommand{\lbold}{\boldsymbol{l}}
\newcommand{\pp}{\boldsymbol{p}}
\newcommand{\qq}{\boldsymbol{q}}
\begin{document}

\title{Extremes of the standardized Gaussian noise}
\author{Zakhar Kabluchko}
\keywords{Extremes, Gaussian fields, Scan statistics, Gumbel distribution,  Pickands' method, Poisson clumping heuristics, local self-similarity}
\subjclass[2000]{Primary, 60G70; Secondary, 60G15, 60F05}
\address{Institute of Stochastics, Ulm University, Helmholtzstr.\ 18, 89069 Ulm, Germany}

\begin{abstract}
Let $\{\xi_n, n\in\Z^d\}$ be a $d$-dimensional array of i.i.d.\ Gaussian random variables and define $\SSS(A)=\sum_{n\in A} \xi_n$, where $A$ is a finite subset of $\Z^d$. We prove that the appropriately normalized maximum of $\SSS(A)/\sqrt{|A|}$, where $A$ ranges over all discrete cubes or rectangles contained in $\{1,\ldots,n\}^d$, converges in the weak sense to the Gumbel extreme-value distribution as $n\to\infty$. We also prove continuous-time counterparts of these results.
\end{abstract}
\maketitle

\section{Introduction and statement of results}\label{sec:main}
Let $\{\xi_i, i\in\N\}$ be independent standard Gaussian random variables. Denote by $S_k=\xi_1+\ldots+\xi_k$ the corresponding random walk and let
\begin{equation}
L_n=\max_{0\leq i<j\leq n}\frac{S_j-S_i}{\sqrt{j-i}}.
\end{equation}
%be the maximal standardized increment of $S_k$.
It has been shown by~\citet{siegmund_venkatraman} that for every $\tau\in\R$,
\begin{equation}\label{eq:siegmund_venkatraman}
\lim_{n\to\infty}\P\left [L_n\leq \sqrt{2\log n}+\frac {\frac12 \log\log n+\log \frac{H}{2\sqrt{\pi}}+\tau}{\sqrt{2 \log n}} \right ]=e^{-e^{-\tau}},
%\lim_{n\to\infty}\P\left [\sqrt{2\log n}(L_n-\sqrt{2\log n}-\frac {\log (\frac{H^2}{4\pi} \log n)} {2\sqrt{2 \log n}} \leq \tau\right ]=\exp(-e^{-\tau}),
\end{equation}
where $H\in(0,\infty)$ is some constant.
%Note that the right-hand side is the double exponential Gumbel distribution well-known in the extreme-value theory.
A different proof of the same result has been given in~\cite{kabluchko_stand_gauss} where also the following continuous-time counterpart of~\eqref{eq:siegmund_venkatraman} can be found. Let $\{B(t),t\geq 0\}$ be a standard Brownian motion. For $n>1$ define
\begin{equation}
M_n=\sup_{\genfrac{}{}{0pt}{1} {x,y\in[0,n]} {y-x\geq 1}} \frac {B(y)-B(x)}{\sqrt{y-x}}.
\end{equation}
Then, for every $\tau\in\R$,
\begin{equation}\label{eq:siegmund_venkatraman_cont}
\lim_{n\to\infty}\P\left [M_n\leq \sqrt{2\log n}+\frac {\frac 32 \log\log n-\log (2\sqrt \pi)+\tau}{\sqrt{2 \log n}}\right]=e^{-e^{-\tau}}.
\end{equation}
Almost sure laws of large numbers for $L_n$, $M_n$ and related quantities have been obtained in~\cite{shao}, \cite{steinebach}, \cite{kabluchko_munk2}.

Our aim here is to prove multidimensional counterparts of~\eqref{eq:siegmund_venkatraman} and~\eqref{eq:siegmund_venkatraman_cont}. We will be interested in the maximum of discrete- or continuous-time $d$-dimensional Gaussian noise standardized by the square root of its variance. The maximum is taken over some family of $d$-dimensional subsets. Here, we will consider two families of subsets, rectangles and cubes, in discrete and continuous setting. Both families are multidimensional generalizations of the collection of one-dimensional intervals.
%In $d\geq 2$ dimensions there are at least two natural candidates replacing the family of intervals: the family of rectangles and the family of cubes.

%Let $\{\xi_n, n\in\Z^d\}$ be i.i.d.\ Gaussian random variables indexed by $\Z^d$. Given a finite set $A\subset \Z^d$, we define $\xi(A)=\sum_{n\in A} \xi_n$. The variance of $\xi(A)$ is equal to $|A|$, the number of elements in $A$. Given a collection $ \mathfrak P_n$ of subsets of $\{1,\ldots,n\}^d$ we define the maximum of the standardized Gaussian noise over $\mathfrak P_n$ by
%\begin{equation}\label{eq:def_Ln}
%L_n=\max_{A\in\mathfrak P_n}\frac{\xi(A)}{\sqrt{|A|}}.
%\end{equation}
%We will be interested in finding the limiting distribution of $L_n$ as $n\to\infty$.
%

Let us state our discrete-time results first. Let $\{\xi_n, n\in\Z^d\}$ be a $d$-dimensional array of i.i.d.\ Gaussian random variables. Given a finite set $A\subset \Z^d$ we define
\begin{equation}
\SSS(A)=\sum_{n\in A} \xi_n.
\end{equation}
A set of the form $\{x_1,\ldots, x_1+h\}\times\ldots\times\{x_d,\ldots,x_d+h\}$, where $x_1,\ldots,x_d\in\Z$ and $h\in\N\cup\{0\}$, is called a $d$-dimensional discrete cube.  Denote by $\Cube$ the set of all discrete $d$-dimensional cubes and let $\Cube_n$ be the set of all discrete $d$-dimensional cubes contained in $\{1,\ldots,n\}^d$. Define
\begin{equation}\label{eq:def_ab_dcube}
u_n(\tau)=\sqrt{2d\log n}+\frac {\frac{1}{2}\log(d\log n)+\log \frac{(2d)^dJ_d}{\sqrt {\pi}}+\tau}{\sqrt{2d \log n}},\;\;\;\tau\in\R,
\end{equation}
where $J_d\in(0,\infty)$ is a constant defined in Lemma~\ref{lem:JG_finite} below.
\begin{theorem}\label{theo:main_dcube}
For every $\tau\in\R$,
$$
\lim_{n\to\infty}\P\left [\max_{A\in\Cube_n}\frac{\SSS(A)}{\sqrt{|A|}}\leq u_n(\tau) \right ]=e^{-e^{-\tau}}.
$$
\end{theorem}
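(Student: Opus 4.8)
The plan is to establish the Gumbel limit via the classical Poisson approximation method for extremes of Gaussian fields, following the Pickands-type approach already used for the one-dimensional results~\eqref{eq:siegmund_venkatraman} and~\eqref{eq:siegmund_venkatraman_cont}. The fundamental object is the standardized field $X(A)=\SSS(A)/\sqrt{|A|}$, indexed by discrete cubes $A\in\Cube$. Each $X(A)$ is a standard Gaussian random variable, and the key structural fact is that the covariance between $X(A)$ and $X(A')$ depends only on the relative overlap of the cubes: $\Cov(X(A),X(A'))=|A\cap A'|/\sqrt{|A||A'|}$. Near the maximum, the dominant contribution comes from cubes that are close together in both location and side length, so I would first reduce the problem to understanding the local behavior of the field through a \emph{local self-similarity} property: rescaling the increments of $X$ near a reference cube produces, in the limit, a canonical Gaussian process whose associated Pickands-type constant is exactly $J_d$ from Lemma~\ref{lem:JG_finite}.

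First I would partition $\{1,\ldots,n\}^d$ into a grid of blocks and, within each block, consider only cubes whose side lengths lie in a dyadic range, so that the total index set is decomposed into $O(n^d\log n)$ roughly independent clusters. The goal is to show that the number of clusters in which $X(A)$ exceeds the high level $u_n(\tau)$ converges to a Poisson random variable with mean $e^{-\tau}$. The expected number of exceedances is computed from the Gaussian tail $\P[X(A)>u_n]\sim (2\pi)^{-1/2}u_n^{-1}e^{-u_n^2/2}$ multiplied by the number of candidate cubes; matching this against $e^{-\tau}$ is precisely what forces the normalization~\eqref{eq:def_ab_dcube}, with the factor $2d\log n$ arising because there are on the order of $n^d$ positions and $n$ choices of side length, giving the effective logarithmic volume $d\log n + \log n$ type counting, and the constant $(2d)^dJ_d/\sqrt{\pi}$ emerging from the local fluctuation structure. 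The second step is to verify Poisson convergence via the Chen–Stein method or a direct moment computation: I would bound the two standard sums, the local clumping term $b_{1,n}$ coming from pairs of nearby cubes within a cluster and the long-range dependence term $b_{2,n}$ coming from pairs in distinct clusters, showing both vanish as $n\to\infty$.

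The main obstacle, and where the constant $J_d$ genuinely enters, is the local analysis of a single cluster: I must show that the probability that $X$ exceeds $u_n$ somewhere in a cluster, normalized by the single-point exceedance probability, converges to a finite limit governed by a Pickands-type double integral. Concretely, fixing a cube of side $h$ and examining the field $X(A')$ for cubes $A'$ obtained by perturbing the location and side length, I would Taylor-expand the covariance and pass to a continuum limit in which the rescaled exceedance set is described by a drifted Gaussian field on $\R^d\times\R$ (the extra dimension being the side-length direction). The finiteness of $J_d$ is exactly the content of Lemma~\ref{lem:JG_finite}, which I would invoke here; the delicate point is controlling the discrete-to-continuous passage uniformly and ruling out exceedances occurring at very different scales within the same spatial region, which is handled by the dyadic scale decomposition together with a Borell–TIS-type concentration bound on the Gaussian supremum.

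Finally, I would assemble these pieces: Poisson convergence of the cluster-exceedance count yields $\P[\max_{A\in\Cube_n} X(A)\leq u_n(\tau)]\to \exp(-e^{-\tau})$, which is the asserted Gumbel limit. I expect the hardest technical work to lie not in the tail asymptotics or the mixing estimates, which are routine given local self-similarity, but in the uniform control of the local clumping behavior across all admissible scales simultaneously, since the side-length parameter ranges over a full logarithmic scale and the field's correlation structure degenerates differently in the spatial and scale directions.
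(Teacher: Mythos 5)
Your overall framework---Poisson approximation for cluster exceedances combined with a Pickands-type local analysis of the locally self-similar field $\XXX(A)=\SSS(A)/\sqrt{|A|}$---is the same as the paper's, and the covariance formula and the appeal to Lemma~\ref{lem:JG_finite} are correct. But there is a genuine gap at the quantitative heart of the argument: you never identify the critical side-length scale, and your heuristic count of effectively independent cubes is wrong. You write that the leading order comes from ``$n^d$ positions and $n$ choices of side length, giving the effective logarithmic volume $d\log n+\log n$''; that would force $\sqrt{2(d+1)\log n}$, contradicting the $\sqrt{2d\log n}$ in~\eqref{eq:def_ab_dcube}. The correct count is that the near-maximal cubes concentrate at side length $h\asymp\log n$, the natural cluster is a spatial box of side $\asymp\log n$ containing all cubes with $h\in[a\log n,b\log n]$, and there are only $\asymp n^d/(\log n)^d$ such clusters; the side-length direction contributes a factor $\asymp\log n$ to the extreme-value rate (see the table in Section~\ref{sec:extrme_rate}), not a factor $n$, which is exactly why the second-order term in $u_n(\tau)$ is $\tfrac12\log(d\log n)$ rather than something of order $\log n$. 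Without pinning down this scale you cannot choose the block size for the Chen--Stein step, nor derive the constant $(2d)^dJ_d/\sqrt{\pi}$.

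A second, related error: you describe the local cluster analysis as a passage ``to a continuum limit'' producing a drifted Gaussian field on $\R^d\times\R$. At the critical scale the lattice does \emph{not} wash out: rescaling by $l_n=[\log n]$ turns the unit lattice into a grid of mesh $q_n=1/[\log n]$, and since $u_n^2\sim 2d\log n$ one has $q_nu_n^2\to 2d$, which is precisely the regime of Theorem~\ref{theo:locstatdiscr} and Proposition~\ref{prop:dcube} with $\ka=2d$. The constant $J_d=\int_0^\infty J_d(h;2d)\,dh$ is therefore an integral of \emph{discrete} Pickands-type constants over the rescaled side length $h$, not the Pickands constant of a single continuum tangent process. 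This is not a cosmetic distinction: replacing $E_d(2d/h)$ by its continuum limit $E_d$ in~\eqref{eq:def_Jd_a} makes $\int_0^\infty h^{-(d+1)}E_d\,dh$ diverge at $h=0$, so a genuinely continuous local analysis would produce an infinite constant. The retained discreteness is what tames the small-scale end, and the residual regimes $h\leq a\log n$ and $h\geq b\log n$ still need the separate estimates the paper supplies (the crude union bound~\eqref{eq:proof_dcube1} giving an error $O(a)$, and the dyadic covering bound of Lemma~\ref{lem:ccube2} giving an error $O(b^{-d})$); your Borell--TIS remark gestures only at the second of these.
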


A set of the form $\{x_1,\ldots, y_1\}\times\ldots\times \{x_1,\ldots,y_d\}$, where $x_i,y_i\in\Z$ and $x_i\leq y_i$ for all $1\leq i\leq d$, is called a $d$-dimensional discrete rectangle. Note that a discrete cube is a discrete rectangle whose sides have equal lengths. Denote by $\Rect$ the collection of all discrete $d$-dimensional rectangles and let $\Rect_n$ be the set of all discrete $d$-dimensional rectangles contained in $\{1,\ldots,n\}^d$.
Define
\begin{equation}\label{eq:def_ab_drect}
u_n(\tau)=\sqrt{2d\log n}+\frac {\left(d-\frac{1}{2}\right)\log(d\log n)+\log\frac{2^{2d-1}d^dG_d^d}{\sqrt {\pi}}+\tau}{\sqrt{2d \log n}},\;\;\;\tau\in\R,
\end{equation}
where $G_d\in(0,\infty)$ is a constant defined in Lemma~\ref{lem:JG_finite} below.
\begin{theorem}\label{theo:main_drect}
For every $\tau\in\R$,
$$
\lim_{n\to\infty}\P\left [\max_{A\in\Rect_n}\frac{\SSS(A)}{\sqrt{|A|}}\leq u_n(\tau) \right ]=e^{-e^{-\tau}}.
$$
\end{theorem}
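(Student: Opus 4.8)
The plan is to follow Pickands' double-sum method together with a Poisson (clumping) approximation, reducing the statement to showing that the expected number of exceedance clusters of the level $u_n(\tau)$ converges to $e^{-\tau}$. Since Theorem~\ref{theo:main_dcube} for cubes will already be available, I would reuse its machinery, the genuinely new ingredient being that a rectangle carries $d$ independent side lengths rather than a single one. Parametrize a rectangle $A\in\Rect_n$ by its corner $\xx\in\Z^d$ and its side-length vector $\hh=(h_1,\ldots,h_d)$, and write $X(\xx,\hh)=\SSS(A)/\sqrt{|A|}$; each $X(\xx,\hh)$ is standard Gaussian, and the decisive structural fact is that the covariance factorizes over coordinates,
\begin{equation*}
\Cov\big(X(\xx,\hh),X(\xx',\hh')\big)=\frac{|A\cap A'|}{\sqrt{|A|\,|A'|}}=\prod_{i=1}^{d}\frac{|I_i\cap I_i'|}{\sqrt{|I_i|\,|I_i'|}},
\end{equation*}
where $I_i,I_i'$ denote the $i$-th edge intervals of $A,A'$. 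This product form is the source of both the factor $G_d^d$ and of the $d$-fold scale integration below.

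First I would establish the local limit: fixing a reference scale and rescaling the corner coordinate by the current side lengths, the field converges to a locally self-similar Gaussian limit that factorizes over the $d$ directions, and I would invoke Lemma~\ref{lem:JG_finite} for the finiteness of the associated Pickands-type constant $G_d$ (the product structure forcing the $d$-dimensional constant to be the $d$-th power of a one-dimensional one). This yields the single-block estimate: for a block of rectangles whose corners lie in a box of side comparable to $h_i$ in each direction and whose side lengths lie in a fixed logarithmic bin, the probability of an exceedance of $u_n(\tau)$ is asymptotically $G_d^d$ times the block's parameter volume times the Gaussian tail.

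Next I would sum the single-block contributions. The $d$ macroscopic corner directions contribute a factor $n^d$, while each of the $d$ side lengths is discretized on a logarithmic grid and integrated via $\int_1^n \mathrm{d}h_i/h_i\sim\log n$, producing $(\log n)^d$; together with the $u_n^{-1}$ from the Gaussian tail this is exactly what manufactures the coefficient $d-\tfrac12$ of $\log\log n$ and the prefactor $2^{2d-1}d^d$ in~\eqref{eq:def_ab_drect}. To upgrade this mean to a genuine Poisson limit I would prove asymptotic independence of exceedances in well-separated blocks through a normal-comparison (Berman--Slepian type) inequality, using that the product covariance above is small unless the two rectangles overlap substantially in every one of the $d$ coordinates; this identifies $\P[\max_{A\in\Rect_n}\SSS(A)/\sqrt{|A|}\le u_n(\tau)]$ with the probability that the limiting Poisson cluster process is empty, namely $e^{-e^{-\tau}}$.

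The hardest part will be the multi-scale bookkeeping and the independence step in the presence of a full $d$-dimensional, highly anisotropic scale manifold: unlike the cube case there are arbitrarily elongated rectangles, so one must (i) carry out the $d$-fold logarithmic scale integration uniformly over all aspect ratios while showing that degenerate rectangles (side lengths too small, too large, or wildly anisotropic) contribute negligibly, and (ii) control pairs of rectangles that overlap in some but not all coordinates, since these are precisely the configurations in which the product covariance is neither negligible nor close to one. Summing the comparison bounds over the enlarged $2d$-dimensional corner index set, and showing that the double sum of off-diagonal correlations vanishes in the limit, is the crux on which the Poisson approximation rests.
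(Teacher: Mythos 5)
Your overall architecture (blocking, a Pickands-type local estimate, a Poisson approximation, and a separate disposal of degenerate rectangles) matches the paper's, and you correctly identify the tensor-product covariance \eqref{eq:cov_XXX_prod_form} as the source of the $d$-th power in the constant. But there is a genuine conceptual error in the step that "manufactures" the normalization: you claim the $(\log n)^d$ factor and the coefficient $d-\tfrac12$ arise from integrating each side length logarithmically, $\int_1^n dh_i/h_i\sim\log n$, i.e.\ from all scales contributing equally. That is not what happens, and a proof built on this picture would produce the wrong constant and, worse, an unworkable multi-scale Poisson scheme. The correct mechanism is that the dominant rectangles have \emph{all} side lengths of order $\log n$: after rescaling by $l_n=[\log n]$ the lattice $\Z^{2d}$ becomes a grid of mesh $q_n=1/l_n$ with $q_nu_n^2\to\kappa=2d$, so the single-block estimate is governed by the \emph{discrete} Pickands-type constant $G(h;2d)=h^{-2}F^2(2d/h)$ of \eqref{eq:def_G} via Theorem~\ref{theo:locstatdiscr} and Proposition~\ref{prop:drect}, and the scale integral that appears is the \emph{convergent} integral $G_d=\int_0^\infty G(h;2d)\,dh$ of Lemma~\ref{lem:JG_finite} (convergent at $\infty$ because $G(h;2d)\sim 1/(4h^2)$, and at $0$ because $F(\kappa)\le 1/\kappa$), not a divergent $\int dh/h$. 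This is precisely where the discrete problem departs from the continuous one of Theorem~\ref{theo:main_crect} (coefficient $2d-\tfrac12$ there versus $d-\tfrac12$ here), and it is what legitimizes the paper's strategy of first proving the limit on $\Rect_n(al_n,bl_n)$ (Lemma~\ref{lem:drect1}) and then letting $a\downarrow0$, $b\uparrow\infty$: the small scales are killed by a union bound with an induction over the dimension (Lemma~\ref{lem:drect2}), and the large scales by a dyadic covering (Lemma~\ref{lem:crect2}). Your plan contains no mechanism for seeing the mesh parameter $\kappa=2d$ at all, so it cannot recover $2^{2d-1}d^dG_d^d$ in \eqref{eq:def_ab_drect}.

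A secondary but substantive divergence: for the Poisson step you propose a Berman--Slepian normal-comparison argument and correctly flag pairs of rectangles overlapping in some but not all coordinates as the crux. The paper avoids this entirely: since the white noise is independently scattered, exceedance events attached to blocks whose underlying regions of $\R^d$ are disjoint are \emph{exactly} independent, so Theorem~\ref{theo:poisson} (finite-range dependence) applies, and the only pairs needing control are the $O(1)$ adjacent blocks per site, handled by the inclusion--exclusion identity $\P[E_{\kk_1,n}\cap E_{\kk_2,n}]=\P[E_{\kk_1,n}]+\P[E_{\kk_2,n}]-\P[E_{\kk_1,n}\cup E_{\kk_2,n}]=o(n^{-d})$, with the union estimated by the same Pickands-type proposition. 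Adopting this route would spare you the correlation double sum you identify as the hardest part.
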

\begin{remark}
The following laws of large numbers hold, see~\cite{kabluchko_munk1}:
$$
\lim_{n\to\infty}\frac{1}{\sqrt{2d\log n}}\max_{A\in\Cube_n}\frac{\SSS(A)}{\sqrt{|A|}}
=\lim_{n\to\infty}\frac{1}{\sqrt{2d\log n}}\max_{A\in\Rect_n}\frac{\SSS(A)}{\sqrt{|A|}}=1 \;\;\; \text{a.s.}
$$
\end{remark}
\begin{remark}
In dimension $d=1$, both Theorems~\ref{theo:main_dcube} and~\ref{theo:main_drect} reduce to~\eqref{eq:siegmund_venkatraman}.
\end{remark}

We also prove the following continuous-time counterparts of Theorems~\ref{theo:main_dcube} and~\ref{theo:main_drect}. Let $\{\W(A),A\in\BB(\R^d)\}$ be an independently scattered random Gaussian measure (white noise) on $\R^d$ whose intensity is the Lebesgue measure. This means that we are given a zero-mean Gaussian process $\W$ indexed by the collection $\BB(\R^d)$ of all Borel subsets of $\R^d$ such that for every $A_1,A_2\in\BB(\R^d)$,
$$
\Cov(\W(A_1),\W(A_2))=|A_1\cap A_2|, %\quad \textrm{ for every } A_1,A_2\in\BB^d.
$$
where $|A|$ denotes the $d$-dimensional Lebesgue measure of a set $A\in\BB(\R^d)$.

A set of the form $[x_1,x_1+h]\times\ldots\times[x_d,x_d+h]$, where $x_1,\ldots,x_d\in\R$ and $h>0$ is called a $d$-dimensional cube.   Let $\CCube$ be the collection of all $d$-dimensional cubes and denote by $\CCube_n$ the set of all cubes contained in $[0,n]^d$. Endow $\CCube$ with its natural topology inherited from the identification $\CCube=\R^d\times (0,\infty)$. It is well-known that the process $\{\W(A), A\in\CCube\}$ has a version with a.s.\ continuous paths. This may be deduced for example from the continuity of the Brownian sheet process. In the sequel, we always deal with such a continuous version. It is not difficult to see that the supremum of the standardized white noise $\W(A)/\sqrt{|A|}$ taken over $A\in \CCube_n$ does not exist due to the singularity appearing as the volume of $A$ approaches $0$. To avoid the singularity, we take some $a> 0$ and define $\CCube_n(a)$ to be the set of all cubes from $\CCube_n$ whose side length $h$ satisfies $h\geq a$.
With a constant $E_d$ to be specified below, see~\eqref{eq:def_Ed}, define
\begin{equation}\label{eq:def_ab_ccube}
u_n(\tau)=\sqrt{2d\log n}+\frac{\left(d+\frac12\right)\log(d\log n)+\log\frac{2^dE_d}{da^d\sqrt{\pi}}+\tau}{\sqrt{2d \log n}},\;\;\;\tau\in\R.
\end{equation}
%Here, $E_d=\int_0^{\infty}E_d(h)$, where $E_d(h)$ will be specified later, .
\begin{theorem}\label{theo:main_ccube}
For every $\tau\in\R$,
$$
\lim_{n\to\infty}\P\left [\sup_{A\in \CCube_n(a)}\frac{\W(A)}{\sqrt{|A|}} \leq u_n(\tau) \right ]=e^{-e^{-\tau}}.
$$
\end{theorem}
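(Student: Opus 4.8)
The plan is to prove Theorem~\ref{theo:main_ccube} by Pickands' double-sum method together with the Poisson clumping heuristic, paralleling the treatment of the discrete cubes in Theorem~\ref{theo:main_dcube} but exploiting the exact self-similarity of the standardized field $Z(A) := \W(A)/\sqrt{|A|}$. Writing a cube as $A = A(\xx,h) = [x_1,x_1+h]\times\cdots\times[x_d,x_d+h]$, the field $Z$ is a zero-mean, unit-variance Gaussian field on the $(d+1)$-dimensional parameter space $\{(\xx,h): h\ge a\}$ with covariance $r(A,A') = |A\cap A'|/\sqrt{|A||A'|}$. It is stationary in the corner $\xx$ for each fixed $h$, and scale-invariant under the simultaneous map $(\xx,h)\mapsto(\lambda\xx,\lambda h)$. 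The goal is to show that the number $N$ of high-level clumps of $Z$ in $\CCube_n(a)$ at the level $u = u_n(\tau)$ is asymptotically Poisson with mean $e^{-\tau}$, whence $\P[\sup_{\CCube_n(a)}Z \le u_n(\tau)]\to\exp(-e^{-\tau})$.

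First I would carry out the local analysis. For nearby cubes a direct computation gives, in the regime $\delta_i,\eta\ge 0$, that $1 - r(A(\xx,h),A(\xx+\boldsymbol\delta,h+\eta)) = \frac1h\big(\sum_{i=1}^d|\delta_i| + \tfrac d2\,\eta\big) + o(|\boldsymbol\delta|+\eta)$, and in general a positively homogeneous, degree-one function of $(\boldsymbol\delta,\eta)$ scaled by $1/h$. Thus $Z$ is locally like a sum of $d+1$ independent two-sided Brownian motions (Hurst index $1/2$, i.e.\ $\alpha=1$, in each of the $d+1$ directions). Rescaling the parameters around a base cube by $u^{-2}$ and performing the usual Pickands shift of measure, the conditioned field converges to this limiting self-similar field, and the associated Pickands-type constant is the finite positive $E_d$ of~\eqref{eq:def_Ed}, whose finiteness is the content of Lemma~\ref{lem:JG_finite}. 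Equivalently, this yields a clump intensity, up to an explicit absolute constant collecting the factors of $2$ and $\sqrt\pi$, proportional to $E_d\,h^{-(d+1)}u^{2(d+1)}\bar\Phi(u)$ per unit parameter volume $d\xx\,dh$; the factor $u^{2(d+1)}$ reflects the locally Brownian ($\alpha=1$) structure in all $d+1$ directions and $h^{-(d+1)}$ the $1/h$-scaling of the metric.

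Next I would integrate this intensity over $\CCube_n(a)$. At each scale $h$ the corner $\xx$ ranges over a set of volume $\sim n^d$, so $\E[N]\approx c\,E_d\,n^d u^{2(d+1)}\bar\Phi(u)\int_a^n h^{-(d+1)}\,dh$, and $\int_a^n h^{-(d+1)}\,dh \sim \tfrac1d a^{-d}$ is dominated by the smallest admissible scale $h\approx a$; this is the source of the factor $1/(da^d)$ and of the cutoff-dependence in~\eqref{eq:def_ab_ccube}. Substituting $u = u_n(\tau)$ and $\bar\Phi(u)\sim(\sqrt{2\pi}\,u)^{-1}e^{-u^2/2}$, the bookkeeping of the powers of $\log n$ fixes the coefficient $d+\tfrac12$ of $\log(d\log n)$ and the constant $\log\frac{2^dE_d}{da^d\sqrt\pi}$, and gives $\E[N]\to e^{-\tau}$. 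To upgrade this first-moment computation to the Poisson limit I would control the second moment via the normal-comparison (Berman) inequality: pairs of cubes far apart in position or in scale-ratio have correlation bounded away from $1$ and contribute negligibly, while nearby pairs are exactly what the local Pickands asymptotics already encode. Hence the clumps form, in the limit, a Poisson process of intensity $e^{-\tau}$.

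I expect the main obstacle to be the uniform control of the double sum over the unbounded, non-stationary range of scales $h\in[a,n]$. Two features demand care. First, cubes of very different sizes can be nested, with $r = \sqrt{|A_1|/|A_2|}$ when $A_1\subset A_2$, so one must quantify the decay of correlation across scales and show that well-separated scale bands decouple. Second, large cubes (side $h$ of order $n$), although few, share the unit variance of the small ones and must be shown to contribute at strictly lower order than the dominant $h\approx a$ band, uniformly in $n$; this is what keeps the supremum finite in the limit despite the continuum of scales. Boundary effects — corners within $O(h)$ of $\partial[0,n]^d$, for which not all translates of a cube fit inside $[0,n]^d$ — are a lower-order correction that I would absorb into the error terms. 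Throughout, the finiteness and positivity of $E_d$ from Lemma~\ref{lem:JG_finite} is what makes the limiting constant well defined.
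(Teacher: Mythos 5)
Your strategy coincides with the paper's: Pickands-type local analysis of the standardized white noise on the $(d+1)$-dimensional parameter space $(\xx,h)$ with index $\alpha=1$, integration of the resulting clump intensity $E_d\,h^{-(d+1)}$ over $\CCube_n(a)$ (dominated by the smallest scale $h\approx a$, whence the factor $(da^d)^{-1}$ in~\eqref{eq:def_ab_ccube}), a Poisson approximation over unit blocks, and a separate argument killing the large scales before letting the upper cutoff tend to infinity. The exponent and constant bookkeeping you describe is correct and reproduces~\eqref{eq:def_ab_ccube}.

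One step of your local analysis is wrong as stated and would, if followed literally, produce the wrong constant. The tangent field of $\XXX$ at a cube $[\xx,\xx+h]$ is \emph{not} a sum of $d+1$ independent Brownian motions attached to the $d+1$ coordinate directions; it is $\sum_{i=1}^d\bigl(V_i(p_i/h)+W_i((p_i+g)/h)\bigr)$ as in~\eqref{eq:def_tang_cube}, because perturbing the side length $h$ moves the far face of the cube in every coordinate simultaneously. Your first-order expansion $\frac1h\bigl(\sum_i|\delta_i|+\frac d2\eta\bigr)$ happens to look additive on the positive orthant, but the local structure $\frac1{2h}\sum_i(|p_i|+|p_i+g|)$ is not additive on all of $\R^{d+1}$, and the Pickands constant is determined by the full tangent field. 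A genuine product of $d+1$ independent Brownian directions would give $H_1^{d+1}=1$ rather than the nontrivial $(d+1)$-dimensional constant $E_d$ of~\eqref{eq:def_Ed}; since you ultimately quote $E_d$ from the paper the final formula is right, but the local computation must be carried out with the coupled tangent process. Two further remarks on execution: the second-moment control is simpler than the Berman-inequality route you propose, since $\W$ is independently scattered and hence the block events are \emph{exactly} independent once the blocks are separated by more than $b+1$; the paper then applies the finite-range-dependence Poisson theorem (Theorem~\ref{theo:poisson}), disposing of the only nontrivial cross term via $\P[E_{\kk_1,n}\cap E_{\kk_2,n}]=\P[E_{\kk_1,n}]+\P[E_{\kk_2,n}]-\P[E_{\kk_1,n}\cup E_{\kk_2,n}]$ and Proposition~\ref{theo:tail_ccube} applied to the union of the two parameter sets. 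And the band $h\in[b,n]$, which you rightly flag as the main obstacle, is handled by a dyadic covering in the scale variable (Lemma~\ref{lem:ccube2}), yielding a contribution $O(b^{-d})$ uniformly in $n$ that vanishes as $b\to\infty$.
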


A set of the form $[x_1,y_1]\times\ldots\times[x_d,y_d]$, where $x_i,y_i\in\R$ and $x_i<y_i$ for all $1\leq i\leq d$ is called a $d$-dimensional rectangle. Note that a $d$-dimensional cube as a $d$-dimensional rectangle with equal side lengths. Let $\CRect$ be the collection of all rectangles.   We denote by $\CRect_n$ the set of all rectangles contained in $[0,n]^d$. Let $\{\W(A), A\in \BB(\R^d)\}$ be a white noise on $\R^d$. The random field $\{\W(A), A\in \CRect\}$ has a version with a.s.\ continuous paths. Given $a>0$ we define  $\CRect_n(a)$ to be the set of all rectangles $[x_1,y_1]\times\ldots\times[x_d,y_d]$ contained in $[0,n]^d$ such that $y_i-x_i\geq a$ for all $1\leq i\leq d$. We set
\begin{equation}\label{eq:def_ab_crect}
u_n(\tau)=\sqrt{2d\log n}+\frac {\left(2d-\frac{1}{2}\right)\log(d\log n)-\log (2 a^d\sqrt{\pi})+\tau}{\sqrt{2d \log n}},\;\;\;\tau\in\R.
\end{equation}
\begin{theorem}\label{theo:main_crect}
For every $\tau\in\R$,
$$
\lim_{n\to\infty}\P\left [\sup_{A\in \CRect_n(a)}\frac{\W(A)}{\sqrt{|A|}} \leq u_n(\tau) \right ]=e^{-e^{-\tau}}.
$$
\end{theorem}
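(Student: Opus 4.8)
The plan is to apply Pickands' double-sum method (equivalently, the Poisson clumping heuristic) to the unit-variance Gaussian field $X(A)=\W(A)/\sqrt{|A|}$, exploiting the fact that for product sets the covariance factorizes. I would parametrize a rectangle by the coordinates of its defining corners, writing $A=\prod_{i=1}^d[x_i,y_i]$ and $h_i=y_i-x_i$. Since the intersection of two rectangles is again a product of intervals,
\[
\Cov(X(A),X(A'))=\prod_{i=1}^d\frac{|[x_i,y_i]\cap[x_i',y_i']|}{\sqrt{(y_i-x_i)(y_i'-x_i')}},
\]
so the correlation is a product of $d$ one-dimensional correlations, each of the type appearing in the continuous interval problem \eqref{eq:siegmund_venkatraman_cont}. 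A direct expansion shows that, perturbing the endpoints $x_i\mapsto x_i+s_i$, $y_i\mapsto y_i+t_i$, one has $1-r\approx\sum_{i=1}^d\frac{|s_i|+|t_i|}{2h_i}$. Hence, near any rectangle the field is, after the standard rescaling, asymptotically a sum of $2d$ independent two-sided Brownian motions: it is locally self-similar with Hurst index $1/2$ (exponent $\alpha=1$) in each of the $2d$ endpoint directions, and the local structure decouples across directions.

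Second, I would establish the local (Pickands) asymptotics. Because the variogram is locally Brownian in each direction and the directions decouple, the relevant generalized Pickands constant is the product of $2d$ copies of the classical constant $\mathcal{H}_1=1$, and therefore equals $1$; this is precisely what makes the normalizing constant in \eqref{eq:def_ab_crect} free of any transcendental factor, in contrast with the cube cases. The exact scaling invariance of white noise, $\W(\lambda A)/\sqrt{|\lambda A|}\eqdistr\W(A)/\sqrt{|A|}$, guarantees that the local behaviour is uniform across scales, so the Pickands constant may be computed once and the logarithmic measure $dh_i/h_i$ emerges naturally in the scale variables.

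Third, I would compute the expected number of high-level clumps. Integrating the local exceedance intensity $\prod_{i=1}^d(4h_i^2)^{-1}\,u^{4d}\,\Psi(u)$ (with $\Psi(u)=1-\Phi(u)$) over the admissible parameter region $\{0\le x_i,\ y_i\le n,\ y_i-x_i\ge a\}$ gives, since $\int_a^n\int_0^{n-h}(4h^2)^{-1}\,dx\,dh\sim n/(4a)$ per coordinate,
\[
\mu_n\sim\frac{n^d}{4^d a^d}\,u^{4d}\,\Psi(u).
\]
Substituting $u=u_n(\tau)$ from \eqref{eq:def_ab_crect} and using $\Psi(u)\sim(u\sqrt{2\pi})^{-1}e^{-u^2/2}$ shows $\mu_n\to e^{-\tau}$; the exponent $2d-\tfrac12$ and the prefactor $2a^d\sqrt\pi$ are exactly those that cancel the powers of $d\log n$ and the numerical constants. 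The standard Poisson approximation (clumps at distant locations are asymptotically independent) then yields $\P[\sup X\le u_n]\sim e^{-\mu_n}\to e^{-e^{-\tau}}$.

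The main obstacle is the upper bound, i.e.\ controlling correlations between dissimilar rectangles. Unlike a stationary field, two rectangles far apart in the parameter space may still be strongly correlated — for instance a thin rectangle nested in a large one, or two rectangles sharing a long common face — so the off-diagonal terms of the double sum require a careful, scale-by-scale decomposition of the parameter space and uniform bounds on $r$ to show they are negligible. Two further technical points must be handled: the boundary effects near the minimal scale $h_i=a$ and near the faces of $[0,n]^d$ (which affect only lower-order terms and are absorbed into the error), and the passage from the continuous supremum to a sufficiently fine discretization, justified by the a.s.\ path continuity together with the local self-similar structure. I expect the nested and overlapping correlation estimates to be the genuinely delicate part; the remainder follows the template already used for the interval and cube cases.
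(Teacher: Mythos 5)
Your overall strategy is the one the paper follows: the field $\XXX(A)=\W(A)/\sqrt{|A|}$ is locally self-similar with index $\alpha=1$, its local structure is $\frac 12\sum_{i=1}^d(|p_i|+|q_i|)/(y_i-x_i)$, the generalized Pickands constant factorizes into $2d$ copies of $H_1=1$, and your first-moment computation reproduces Proposition~\ref{theo:tail_crect}; the check that $\mu_n\to e^{-\tau}$ under~\eqref{eq:def_ab_crect} is correct. So the local analysis and the clump counting are right.

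The gap is that the step you explicitly defer -- controlling the contribution of rectangles far from the ``typical'' scale -- is the only part of the proof not already contained in the cited general theory, and you do not supply it; moreover you slightly misdiagnose where the difficulty lies. No delicate correlation estimates for nested or face-sharing rectangles are needed. The paper first truncates to $\CRect_n(a,b)$ (all sides in $[a,b]$); after tiling the locations $\xx$ into unit blocks, the resulting exceedance events are \emph{exactly} finite-range dependent, because white noise restricted to disjoint spatial regions is independent, so events attached to blocks more than $b+1$ apart are independent outright. Theorem~\ref{theo:poisson} then gives the Poisson limit with mean $e^{-\tau}(1-a/b)^d$ (Lemma~\ref{lem:crect1}). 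The rectangles with some side exceeding $b$ are then disposed of not by correlation bounds but by a crude union bound over a dyadic covering: $\CRectm_n(a,b)$ is covered by $O(b^{-1}a^{-(d-1)}n^d)$ affine copies $\RR_{\kk,\lbold}$ of a fixed compact piece, each with exceedance probability $O(u^{4d-1}e^{-u^2/2})$ uniformly by the affine invariance of $\XXX$ together with Proposition~\ref{theo:tail_crect}; this is Lemma~\ref{lem:crect2} and yields a total contribution $O(b^{-1})$ at level $u_n(\tau)$, which vanishes as $b\to\infty$. Without this two-stage argument (truncate, Poissonize, then let $b\uparrow\infty$) your plan does not close: the set $\CRect_n(a)$ is not compact in the parameter space, so Theorem~\ref{theo:locstat} cannot be applied to it directly, and the asymptotic independence of ``clumps'' you invoke is exactly what has to be proved for the unbounded scale range.
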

\begin{remark}
Using methods similar to that of~\cite{kabluchko_munk1} it is possible to prove the following laws of large numbers: for every $a>0$,
$$
\lim_{n\to\infty}\frac{1}{\sqrt{2d\log n}}\sup_{A\in\CCube_n(a)}\frac{\W(A)}{\sqrt{|A|}}
=\lim_{n\to\infty}\frac{1}{\sqrt{2d\log n}}\sup_{A\in\CRect_n(a)}\frac{\W(A)}{\sqrt{|A|}}=1 \;\;\; \text{a.s.}
$$
\end{remark}
\begin{remark}
In dimension $d=1$, both Theorems~\ref{theo:main_ccube} and~\ref{theo:main_crect} reduce to~\eqref{eq:siegmund_venkatraman_cont}.
\end{remark}
The maxima of the standardized Gaussian noise over the set of discrete rectangles have been studied in~\cite{siegmund_yakir}, where, in particular, Proposition~\ref{prop:drect} can be found. The arguments of~\cite{siegmund_yakir} are somewhat heuristical; we use a different method.

\section{Asymptotic extreme-value rate}\label{sec:extrme_rate}
It is well-known that the maximum of a large number number of dependent random variables behaves in the same way as the maximum of the same number of independent random variables provided the dependence between the variables is weak enough, see~\cite[Ch.~3,4]{Lead}. It should be stressed that the results of Section~\ref{sec:main} do not fall into this category. To compare the behavior of the maxima of the standardized Gaussian noise to the behavior of independent Gaussian random variables, we introduce a notion of asymptotic extreme-value rate which is of independent interest.
To begin with, recall a well-known fact, see~\cite{Lead}, that if $\{\xi_i,i\in\N\}$ are independent standard Gaussian random variables, then for every $\tau\in\R$,
\begin{equation}\label{eq:equatmain}
\lim_{n\to\infty}\P\left [\max_{i=1,\ldots,n}\xi_i \leq u_n(\tau)\right]=e^{-e^{-\tau}},%\exp(-e^{-\tau}),
\end{equation}
where $u_n(\tau)$ is given by
\begin{equation}\label{eq:def_ab}
u_n(\tau)=\sqrt{2\log n}+\frac{-\frac 12 \log\log n -\log 2\sqrt{\pi}+\tau}{\sqrt{2\log n}},
\;\;\; \tau\in\R.
%b_n=\frac{1}{\sqrt{2\log n}}.
\end{equation}
%Given a dependent vector $\{\xi_i,i=1,\ldots, N\}$ of standard normal variables, we would like to determine the number $f(N)$ of \textit{independent} standard normal variables $\{\eta_i, i=1,\ldots, f(N)\}$ such that  behavior of $\max_{i=1,\ldots,f(N)}\eta_i$ is in some sense close to the behavior of the maximum of the dependent vector $\xi_i$. By the above, we should have $f(N)\leq N$. The next definition makes this precise.
\begin{definition}
For every $n\in \N$ let a zero-mean, unit-variance Gaussian field  $\XXX_n=\{\XXX_n(t), t\in T_n\}$ defined on some parameter space $T_n$ be given. Let $f:\N\to\R$ be some function satisfying $\lim_{n\to\infty}f(n)=+\infty$.  We say that the sequence of random fields $\XXX_n$ has extreme-value rate $f$ if for every $\tau\in \R$,
\begin{equation}\label{eq:def_extreme_rate}
\lim_{n\to\infty}\P\left[\sup_{t\in T_n} \XXX_n(t)\leq u_{f(n)}(\tau) \right]=e^{-e^{-\tau}}, %\exp(-e^{-\tau}),
\end{equation}
where $u_n(\tau)$ is defined as in~\eqref{eq:def_ab}.
\end{definition}
Roughly speaking, condition~\eqref{eq:def_extreme_rate} says that the supremum of $\XXX_n$ has the same asymptotic behavior as the supremum of $f(n)$ i.i.d.\ standard Gaussian variables. The next elementary lemma is useful for computing extreme-value rates.
\begin{lemma}\label{lem:extreme_value}
Let $u_n(\tau)$ be given by~\eqref{eq:def_ab} and let $f(n)=\alpha n^{\beta}(\log n)^{\gamma}$ for some $\alpha,\beta>0$ and $\gamma\in\R$. Then, as $n\to\infty$,
$$
u_{f(n)}(\tau)=\sqrt{2\beta\log n}+\frac{(\gamma-\frac 12) \log(\beta\log n)+\log \frac{\alpha}{2\beta^{\gamma}\sqrt{\pi}}+\tau+o(1)}{\sqrt{2\beta\log n}}.%+o\left(\frac{1}{\sqrt{2\log n}}\right),
%,\;\;\;
%b_{f(n)}\sim\frac{1}{\sqrt{2\log n}}.
$$
\end{lemma}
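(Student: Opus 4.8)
The statement is a purely analytic asymptotic expansion: one substitutes $m=f(n)=\alpha n^{\beta}(\log n)^{\gamma}$ into the defining formula~\eqref{eq:def_ab} for $u_m(\tau)$ and expands as $n\to\infty$. The plan is to track every contribution down to order $o(1/\sqrt{\log n})$, because after multiplication by the normalizing factor $\sqrt{2\beta\log n}$ precisely such contributions collapse into the $o(1)$ sitting inside the bracket. So the entire task reduces to expanding the two pieces of $u_m(\tau)$, namely the leading term $\sqrt{2\log m}$ and the fluctuation term $\bigl(-\tfrac12\log\log m-\log 2\sqrt\pi+\tau\bigr)/\sqrt{2\log m}$, and then collecting.

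First I would record the exact identity
\[
\log m=\beta\log n+\gamma\log\log n+\log\alpha=\beta\log n\Bigl(1+\tfrac{\gamma\log\log n+\log\alpha}{\beta\log n}\Bigr),
\]
which already shows $\log m=\beta\log n\,(1+o(1))$. Applying $\sqrt{1+x}=1+\tfrac x2+O(x^2)$ with $x=(\gamma\log\log n+\log\alpha)/(\beta\log n)$ gives
\[
\sqrt{2\log m}=\sqrt{2\beta\log n}+\frac{\gamma\log\log n+\log\alpha}{\sqrt{2\beta\log n}}+o\!\left(\tfrac{1}{\sqrt{\log n}}\right),
\]
the error being $O\bigl((\log\log n)^2(\log n)^{-3/2}\bigr)$, hence negligible at our level of precision. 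For the fluctuation term I would use $\log\log m=\log(\beta\log n)+\log\bigl(1+O(\tfrac{\log\log n}{\log n})\bigr)=\log(\beta\log n)+o(1)$ in the numerator, and replace $\sqrt{2\log m}$ by $\sqrt{2\beta\log n}$ in the denominator; since the numerator is only $O(\log\log n)$, the resulting relative error of order $O(\tfrac{\log\log n}{\log n})$ again produces a contribution that is $o(1/\sqrt{\log n})$. Thus the fluctuation term equals $\bigl(-\tfrac12\log(\beta\log n)-\log(2\sqrt\pi)+\tau\bigr)/\sqrt{2\beta\log n}$ up to $o(1/\sqrt{\log n})$.

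Adding the two pieces and placing everything over the common denominator $\sqrt{2\beta\log n}$ leaves the numerator
\[
\gamma\log\log n+\log\alpha-\tfrac12\log(\beta\log n)-\log(2\sqrt\pi)+\tau+o(1).
\]
The final step is cosmetic: using $\log\log n=\log(\beta\log n)-\log\beta$ to rewrite $\gamma\log\log n=\gamma\log(\beta\log n)-\gamma\log\beta$, the $\log(\beta\log n)$ terms combine into $(\gamma-\tfrac12)\log(\beta\log n)$, while the constants assemble as $\log\alpha-\log 2-\gamma\log\beta-\log\sqrt\pi=\log\frac{\alpha}{2\beta^{\gamma}\sqrt\pi}$, which is exactly the claimed form. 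The only point requiring genuine care — rather than routine algebra — is the bookkeeping of the remainder: one must verify at each substitution that the discarded terms are truly $o(1/\sqrt{\log n})$, so that they contribute $o(1)$ inside the bracket after rescaling; this is the sole place where a careless estimate could spoil the identity, and it is where I would concentrate the rigor.
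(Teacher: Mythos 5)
Your proposal is correct: the paper states this lemma without proof, calling it elementary, and your computation (substituting $\log f(n)=\beta\log n+\gamma\log\log n+\log\alpha$, expanding $\sqrt{2\log f(n)}$ via $\sqrt{1+x}=1+\tfrac{x}{2}+O(x^2)$, and verifying that all discarded terms are $o(1/\sqrt{\log n})$) is exactly the intended argument. The final regrouping of $\gamma\log\log n$ as $\gamma\log(\beta\log n)-\gamma\log\beta$ correctly produces the constant $\log\frac{\alpha}{2\beta^{\gamma}\sqrt{\pi}}$, so nothing is missing.
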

The extreme-value rates of the standardized Gaussian noise over various collections of subsets can be now evaluated by comparing the results of Section~\ref{sec:main} with Lemma~\ref{lem:extreme_value}. %Let the constants $a_n,b_n$ be defined by~\eqref{eq:def_ab} and let $f(n)=cn(\log n)^d$. Then, as $n\to\infty$,
%$$
%a_{f(n)}=\sqrt{2\log n}+\frac{(-\frac 12+d) \log\log n+\log \frac{c}{2\sqrt{\pi}}+o(1)  %}{\sqrt{2\log n}}%+o\left(\frac{1}{\sqrt{2\log n}}\right),
%,\;\;\;
%b_{f(n)}\sim\frac{1}{\sqrt{2\log n}}.
%$$
%\vspace*{0.25cm}
\begin{center}
\begin{tabular}{|l|l|}
\hline
\rule{0mm}{5mm}Collection of subsets $T_n$& Extreme-value rate $f(n)$\\
\hline
$T_n=\Cube_n$, discrete cubes in $\{1,\ldots,n\}^d$   & $(2d)^{d+1}J_d n^d\log n$\rule{0mm}{5mm}\\
\rule{0mm}{5mm}$T_n=\Rect_n$, discrete rectangles in $\{1,\ldots,n\}^d$ & $(2d)^{2d}G_d^d n^d (\log n)^d$ \rule{0mm}{5mm}\\
$T_n=\CCube_n(a)$, cubes in $[0,n]^d$ with side $\geq a$  & $2E_d\left(\frac{2d}{a}\right)^d n^d (\log n)^{d+1}$\rule{0mm}{5mm}\\
$T_n=\CRect_n(a)$, rectangles in $[0,n]^d$ with sides $\geq a$  & $\frac{d^{2d}}{a^d}n^d (\log n)^{2d}$\rule{0mm}{5mm}\\
\hline
\end{tabular}
\end{center}
\vspace*{0.25cm}

The rest  of the paper is organized as follows.
In Section \ref{sec:locstat} we recall the definition of locally self-similar Gaussian fields. Applications of this notion will be given in Section~\ref{sec:locstat_appl}. In Section~\ref{sec:poisson} we recall a Poisson limit theorem for finite-range dependent events. The proofs of our result are given in Sections~\ref{sec:proof_cont} and~\ref{sec:proof_discr}.

Throughout the paper we use the following notation. We denote $d$-dimensional vectors by $\xx=(x_1,\ldots,x_d)$, $\yy=(y_1,\ldots,y_d)$ etc. We write $\xx\leq \yy$ if $x_i\leq y_i$ for all $1\leq i\leq d$. Given some $\xx,\yy\in\R^d$ with $\xx\leq \yy$, we denote by $[\xx,\yy]$ the $d$-dimensional rectangle $[x_1,y_1]\times\ldots\times [x_d,y_d]$. Given $\xx,\yy\in\Z^d$ with $\xx\leq \yy$, we denote by $[\xx,\yy]_{\Z^d}$ the  discrete $d$-dimensional rectangle consisting of all $\zz\in\Z^d$ such that $x_i\leq z_i<y_i$ for all $1\leq i\leq d$. We denote by $C$ a large positive constant whose value may change from line to line.

\section{Locally self-similar Gaussian fields}\label{sec:locstat}
%After preliminary results by Cramer, Leadbetter, Volkonski, Rozanov, Ber\-man, Slepian  and others, this question was studied by Pickands \cite{P1,P2} (see also \cite[Chapter 12]{Lead},  \cite{piterbarg}).
The main tool of our proofs is the extreme-value theory of continuous-time Gaussian processes initiated by \citet{P1,P2}. Let $\{\XXX(t),t\in \R\}$ be a stationary zero-mean Gaussian process whose covariance function $r(s)=\E[\XXX(0)\XXX(s)]$ satisfies
$r(s)=1-C|s|^{\alpha}+o(|s|^{\alpha})$ as $s\to 0$
for some $\alpha\in(0,2]$, $C>0$, and suppose that $r(s)=1$ holds only for $s=0$.  Under these conditions, \citet{P1} proved the asymptotic equality
\begin{equation}\label{eq:pickands}
\P\left[\sup_{t\in [0,l]} \XXX(t)>u\right]\sim l H_{\alpha}C^{1/\alpha} \frac{1}{\sqrt{2\pi}}u^{2/\alpha-1} e^{-u^2/2}, \qquad u\to+\infty,
\end{equation}
for all $l>0$, where $H_{\alpha}\in (0,\infty)$ is the so-called Pickands constant. Only the values $H_{1}=1$ and $H_2=\pi^{-1/2}$ are known rigorously.
Neither the assumption of stationarity nor the one-dimensionality of the parameter set of the process is relevant for Pickands' approach.
His result has been extended in various directions by~\citet{qualls_watanabe,qualls_watanabe1}, \citet{bickel_rosenblatt}, \citet{huesler}, \citet{albin} (see also the subsequent works by Albin), \citet{piterbarg_fatalov}, \citet{PM}, \citet{ChL}; see also the monographs~\cite{piterbarg}, \cite[Ch.~12]{Lead}. A general non-rigorous approach, called the Poisson clumping heuristics, has been suggested by~\citet{Ald}.

We recall a result of~\citet{ChL}, see also~\cite{PM}, which will play a fundamental role in the sequel. It is an extension of~\eqref{eq:pickands} to the locally self-similar fields (also called locally stationary fields), a class of Gaussian fields satisfying certain local condition.   A function $f:\R^d \to \R$  is called homogeneous of order $\alpha>0$ if $f(\lambda s)=|\lambda|^\alpha f(s)$ for each $s\in \R^d$ and $\lambda\in \R$.
Let $H(\alpha)$ be the linear space of all continuous homogeneous functions of order $\alpha$ endowed with the norm $\|f\|=\sup_{\|t\|_2=1}f(t)$. We may identify $H(\alpha)$
with the  Banach space $C(\mathbb S^{d-1})$
of continuous functions on the unit sphere $\mathbb S^{d-1}$ in $\mathbb R^d$.
Let $H^+(\alpha)$ be the cone of all strictly positive functions in $H(\alpha)$.
\begin{definition}[see \cite{ChL}] \label{def:loc_self_sim}
Let $\{\XXX(t),t\in D\}$ be a zero-mean, unit-variance Gaussian field  defined on some domain  $D\subset \mathbb R^d$. Let $r(t_1,t_2)=\E[\XXX(t_1)\XXX(t_2)]$ be the covariance function of $\XXX$ and suppose that $r(t_1,t_2)<1$ for $t_1\neq t_2$.
The field $\XXX$ is called \textit{locally self-similar} with index  $\alpha\in(0,2]$  if for every $t\in D$ a function $C_t\in H^+(\alpha)$ exists such that the following two conditions hold:
\begin{enumerate}
\item we have
$
\lim_{\|s\|_2 \to 0} \frac {1-r(t,t+s)}{C_t(s)}=1
$
uniformly in $t$ on compact subsets of $D$;
\item the map $C_{\bullet}:D\to H^+(\alpha)$, sending $t$ to $C_t$, is continuous.
\end{enumerate}
\end{definition}
The collection of homogeneous functions $C_{t}$, $t\in D$, is referred to as the \textit{local structure} of the field~$\XXX$. It can be shown~\cite{kabluchko_stand_gauss} that for every $t\in D$ there exists a finite measure $\Gamma_t$ on $\mathbb S^{d-1}$ such that the following representation holds
$$
C_t(s)=\int_{\mathbb S^{d-1}}|\langle s,x\rangle|^{\alpha} d\Gamma_t(x).
$$
%The support of $\Gamma_t$ is not contained in any proper linear subspace of $\mathbb R^d$.

The next theorem, proved in~\cite{ChL} (see also~\cite{PM} for a similar result), describes the asymptotic behavior of the high excursion probability of a locally self-similar Gaussian field.
\begin{theorem}[see~\cite{ChL, PM}]\label{theo:locstat}
Let $\{\XXX(t),t\in D\}$ be a  Gaussian field defined on some domain  $D\subset \mathbb R^d$. %Suppose that its correlation function $r(\cdot,\cdot)$ satisfies  $r(t_1,t_2)=1 \Leftrightarrow t_1=t_2$.
Suppose that $\XXX$ is locally self-similar of index $\alpha$ with local structure $C_t(s)$. Let $K\subset D$ be a
compact set with positive Jordan measure. Then, as $u\to\infty$,
\begin{equation}\label{eq:chan_lai}
\P\left[\sup_{t\in K} \XXX(t)>u \right]\sim  \frac 1 {\sqrt {2\pi}}\left(\int_K \Lambda(t)dt\right)  \;u^{\frac {2d}{\alpha}-1}e^{-u^2/2},
\end{equation}
where the function $\Lambda:D\to(0,\infty)$ is defined in~\eqref{eq:def_high_cross_int} below.
\end{theorem}
The function $\Lambda$, which might be called the \textit{high excursion intensity} of $\XXX$, is defined as follows.
For each $t\in D$, let $\{Y_t(s),s\in \R^d \}$ be a Gaussian field such that for all $s,s_1,s_2\in \R^d$,
\begin{align}\label{eq:def_proc_Y}
\E Y_t(s)=-C_t(s),\;
\Cov (Y_t(s_1),Y_t(s_2))=C_t(s_1)+C_t(s_2)-C_t(s_1-s_2).
\end{align}
%\begin{align}
%\E\left[Y_t(s)\right]&=-C_t(s)\label{eq:EY}\\
%\Cov (Y_t(s_1),Y_t(s_2))&=C_t(s_1)+C_t(s_2)-C_t(s_1-s_2).\label{eq:VarY}
%\end{align}
The field $Y_t$ describes the local behavior of the field $\XXX$ conditioned to reach an  extremely high value at $t$ and will be therefore called the \textit{tangent process} of $\XXX$ in the sequel. Note that $\{Y_t(s)+C_t(s), s\in\R^d\}$ is a zero-mean self-similar Gaussian field with stationary increments, a non-isotropic generalization of the fractional Brownian motion.
With the above notation, it has been shown in~\cite{ChL} that the following limit exists in $(0,\infty)$ and is a continuous function of  $t$:
\begin{equation}\label{eq:def_high_cross_int}
\Lambda(t)=\lim_{T\to\infty} \frac{1}{T^d}\E\left[\exp
\left(\sup_{s\in[0,T]^d}Y_t(s)\right) \right].
\end{equation}

%Alternatively, $H(t)$ can be defined by
%\begin{equation}\label{eq:Ht}
%H(t)=\lim_{T\to\infty} \frac 1{T^d} \int_{0}^{\infty}\P\left[\sup_{s\in[0,T]^d} Y_t(s)>w\right]e^wdw.
%\end{equation}

The following theorem has been obtained as a by-product of Theorem~\ref{theo:locstat} in~\cite{ChL}. It describes the asymptotic behavior of the high excursion probability over a finite grid with mesh size going to $0$. For one-dimensional stationary processes it can be found in~\cite[Lemma 12.2.4]{Lead}.
\begin{theorem}\label{theo:locstatdiscr}
Suppose that  the conditions of Theorem~\ref{theo:locstat} are satisfied.
Let  $u\uparrow \infty$ and $q\downarrow 0$ in such a way that $q u^{2/\alpha}\to \ka$ for some constant $\ka>0$. Then,
\begin{equation}\label{eq:chan_lai_discr}
\P\left[\max_{t\in K\cap q\Z^d} \XXX(t)>u \right]\sim  \frac 1 {\sqrt {2\pi}}\left(\int_K \Lambda(t; \ka)dt\right)  \;u^{\frac {2d}{\alpha}-1}e^{-u^2/2},
\end{equation}
where
\begin{equation}\label{eq:def_Ha}
%H_a(t)=\lim_{T\to\infty} \frac 1{T^d} \int_{0}^{\infty}\P\left[\sup_{s\in[0,T]^d\cap a\Z^d} Y_t(s)>w\right]e^wdw.
\Lambda(t; \ka)=\lim_{T\to\infty} \frac{1}{T^d}\E\left[\exp\left(\sup_{s\in[0,T]^d\cap \ka\Z^d}Y_t(s)\right) \right].
\end{equation}
Furthermore, $\lim_{\ka\downarrow 0}\Lambda(t; \ka)=\Lambda(t)$, where $\Lambda(t)$ is the high excursion intensity of $\XXX$.
\end{theorem}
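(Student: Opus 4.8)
The plan is to adapt Pickands' double-sum method, in the locally self-similar form used to establish Theorem~\ref{theo:locstat}, while keeping careful track of the grid throughout. The guiding observation is the scaling: fixing a reference point $t\in K$ and substituting $s=u^{-2/\alpha}\sigma$, the homogeneity $C_t(u^{-2/\alpha}\sigma)=u^{-2}C_t(\sigma)$ shows that the rescaled increment field, conditioned on a high value at $t$, namely $u(\XXX(t+u^{-2/\alpha}\sigma)-u)$, converges to the tangent process $Y_t(\sigma)$ of~\eqref{eq:def_proc_Y}. Under this rescaling a cube of side $h=Tu^{-2/\alpha}$ in the $s$-variable becomes $[0,T]^d$ in the $\sigma$-variable, while the grid $q\Z^d$ becomes $(qu^{2/\alpha})\Z^d$, which by the hypothesis $qu^{2/\alpha}\to\ka$ converges to $\ka\Z^d$. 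This is precisely why the discrete grid $\ka\Z^d$ appears inside $\Lambda(t;\ka)$ in~\eqref{eq:def_Ha}.

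First I would prove a discrete Pickands lemma: for fixed $t$ and $T$, as $u\to\infty$ and $q\downarrow0$ with $qu^{2/\alpha}\to\ka$,
\begin{equation*}
\P\!\left[\max_{s\in([0,Tu^{-2/\alpha}]^d)\cap q\Z^d}\XXX(t+s)>u\right]\sim \frac{1}{\sqrt{2\pi}}\,u^{-1}e^{-u^2/2}\,\E\!\left[\exp\Big(\sup_{\sigma\in[0,T]^d\cap\ka\Z^d}Y_t(\sigma)\Big)\right],
\end{equation*}
which is the grid analogue of the single-cube estimate underlying Theorem~\ref{theo:locstat}; it is obtained by conditioning on the value at the maximizing point and passing to the tangent limit, the only change from the continuous case being that the supremum is taken over the rescaled grid. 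I would next tile $K$ by cubes $\Delta_i$ of side $h=Tu^{-2/\alpha}$, apply the Bonferroni inequalities, and recognize the resulting single-cube sum as a Riemann sum: since there are $\sim|K|h^{-d}=|K|T^{-d}u^{2d/\alpha}$ cubes, the leading contribution is
\begin{equation*}
\frac{1}{\sqrt{2\pi}}\,u^{2d/\alpha-1}e^{-u^2/2}\int_K \frac{1}{T^d}\,\E\!\left[\exp\Big(\sup_{\sigma\in[0,T]^d\cap\ka\Z^d}Y_t(\sigma)\Big)\right]dt .
\end{equation*}
For the matching lower bound the main work is to show that the Bonferroni double sum $\sum_{i\neq j}\P[\max_{\Delta_i\cap q\Z^d}\XXX>u,\ \max_{\Delta_j\cap q\Z^d}\XXX>u]$ over distinct cubes is negligible relative to the main term; this rests on the decorrelation guaranteed by the standing assumption $r(t_1,t_2)<1$ for $t_1\neq t_2$ together with the local structure, exactly as in the continuous argument, and passing to the grid can only decrease the relevant probabilities.

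Finally, letting $T\to\infty$ replaces the integrand by the limit~\eqref{eq:def_Ha}, and I would verify that $\Lambda(t;\ka)$ exists in $(0,\infty)$ and is continuous in $t$ by the same subadditivity argument that yields $\Lambda(t)$ in Theorem~\ref{theo:locstat}, the grid causing no essential difficulty. For the concluding statement $\lim_{\ka\downarrow0}\Lambda(t;\ka)=\Lambda(t)$, I would use that $Y_t$ has a.s.\ continuous paths, so for each fixed $T$ the grid maximum increases to $\sup_{\sigma\in[0,T]^d}Y_t(\sigma)$ as $\ka\downarrow0$; monotone convergence then gives convergence of the expectations for fixed $T$, and monotonicity of $\Lambda(t;\ka)$ in $\ka$ (a coarser grid yields a smaller supremum) together with the uniform bound $\Lambda(t;\ka)\le\Lambda(t)$ lets me interchange the limits in $T$ and $\ka$.

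I expect the two genuinely delicate points to be the negligibility of the double sum in the presence of the grid, where one must ensure the decorrelation estimates are uniform in $q$ along the scaling $qu^{2/\alpha}\to\ka$, and the interchange of the $T\to\infty$ and $\ka\downarrow0$ limits in the final assertion, where establishing monotonicity of $\Lambda(t;\ka)$ in $\ka$ is the cleanest route to justifying $\lim_{\ka\downarrow0}\Lambda(t;\ka)=\Lambda(t)$.
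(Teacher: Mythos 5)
The paper does not actually prove this theorem: it is imported from \cite{ChL} (with the one-dimensional stationary case attributed to \cite[Lemma 12.2.4]{Lead}), so there is no internal proof to compare against. Your outline is the standard Pickands double-sum argument that those sources use, and most of it is sound: the rescaling heuristic identifying $\ka\Z^d$ as the limit of the rescaled grid $(qu^{2/\alpha})\Z^d$ is the right explanation for the form of $\Lambda(t;\ka)$, the discrete single-cube lemma is the correct local ingredient (modulo the routine point that the grid offset relative to each tile varies and the rescaled spacing is only asymptotically $\ka$, so the local functional must be continuous in the grid), and your observation that the double sum over distinct tiles is dominated by its continuous counterpart, hence negligible by the same decorrelation estimates, is a legitimate shortcut.

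The one step that does not go through as written is the justification of $\lim_{\ka\downarrow 0}\Lambda(t;\ka)=\Lambda(t)$. You claim monotonicity of $\Lambda(t;\ka)$ in $\ka$ on the grounds that ``a coarser grid yields a smaller supremum,'' but the lattices $\ka\Z^d$ are not nested as $\ka$ decreases: $\ka\Z^d\subset\ka'\Z^d$ only when $\ka/\ka'$ is an integer, so for general $\ka'<\ka$ there is no pathwise comparison of the two grid suprema, and the monotone-convergence/interchange argument collapses. What survives is the one-sided bound $\Lambda(t;\ka)\le\Lambda(t)$ (grid supremum $\le$ continuous supremum), giving $\limsup_{\ka\downarrow0}\Lambda(t;\ka)\le\Lambda(t)$; for the matching lower bound you need either uniformity in $\ka$ of the $T\to\infty$ limit (e.g.\ via the superadditivity that makes the limit an infimum/supremum over $T$, combined with dominated convergence at fixed $T$ using integrability of $\exp(\sup_{[0,T]^d}Y_t)$ from Borell's inequality), or the route of \cite[Ch.~12]{Lead}, which compares the discrete and continuous exceedance probabilities directly and shows their difference is $o(u^{2d/\alpha-1}e^{-u^2/2})$ uniformly as $\ka\downarrow0$, then reads off the convergence of the constants. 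Name and carry out one of these and the argument is complete.
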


\section{Applications to the standardized Gaussian noise}\label{sec:locstat_appl}
%\begin{example}[]\label{ex:upperhalf1}
The next example, see~\cite[J25]{Ald}, \cite{ChL}, will play a major role in the sequel.
Let $\{B(t), t\in\R\}$ be the standard Brownian motion. Let $\DInt=\{(x,y)\in\R^2 | x<y\}$ be the space of all intervals in $\R$. Define a Gaussian field $\{\U(x,y), (x,y)\in \DInt\}$ by
\begin{equation}\label{eq:def_UUU}
\U(x,y)=\frac{B(y)-B(x)}{\sqrt{y-x}}.
\end{equation}
It is elementary to verify, see~\cite[J25]{Ald}, \cite{ChL}, \cite{kabluchko_stand_gauss}, that the field $\U$ is locally self-similar with index $\alpha=1$ and a  local structure given by
\begin{equation}\label{eq:loc_struct_onedim}
C_{(x,y)}(p,q)=\frac 12 \cdot \frac{|p|+|q|}{y-x}, \;\;\;
(x,y)\in \DInt, \;\;\;(p,q)\in \R^2.
\end{equation}
%where $t=(x,y)\in \mathcal D$ and $(p,q)\in \R^2$.
%\end{example}

We will be interested in multidimensional generalizations of this example. Recall that a rectangle in $\R^d$ is a set of the form $[\xx,\yy]=[x_1,y_1]\times\ldots\times [x_d,y_d]$. We can identify the collection $\CRect$ of rectangles with $\DInt^d$. Also, we will sometimes identify a rectangle $[\xx,\yy]\in\CRect$ with a pair $(\xx,\yy)\in\R^{2d}$.
Denote by $\{\W(A), A\in \CRect\}$ a white noise indexed by rectangles. Recall our standing assumption that the sample paths of $\W$ are continuous.
 %Let $\BB_+(\R^d)$ be the collection of all Borel subsets of $\R^d$ whose Lebesgue measure is strictly positive.
Define a Gaussian random field $\{\XXX(A), A\in \CRect\}$, called the standardized white noise, by
\begin{equation}\label{eq:def_XXX}
\XXX(A)=\frac{\W(A)}{\sqrt{|A|}}.
\end{equation}
The covariance function $r_{\XXX}$ of the random field $\{\XXX(A), A\in \CRect\}$ is an $n$-fold tensor product of the covariance function $r_{\U}$ of the process $\U$ from~\eqref{eq:def_UUU}, i.e.,
%for $t'=(x_1',y_1',\ldots, x_d',y_d')$ and $t'=(x_1'',y_1'',\ldots, x_d'',y_d'')$
if $A=[\xx,\yy]$ and $A'=[\xx',\yy']$ are in $\CRect$, then
\begin{equation}\label{eq:cov_XXX_prod_form}
r_{\XXX}(A,A')=\prod_{i=1}^d r_{\U}((x_i,y_i),(x_i',y_i')).
\end{equation}
\begin{proposition}\label{theo:tail_crect}
Let $\{\XXX(A),A\in\CRect\}$ be the standardized white noise defined in~\eqref{eq:def_XXX}. Let $\RR\subset \CRect$ be a compact subset of the space of rectangles $\CRect$ having a  positive Jordan measure. Then, as $u\to+\infty$,
\begin{equation}
\P\left [\sup_{A\in \RR} \XXX(A) > u \right]\sim  \frac{1}{4^{d}\sqrt{2\pi}} \left(\int_{\RR} \frac {d\xx d\yy}{\prod_{i=1}^d(y_i-x_i)^2}\right)  u^{4d-1} e^{- u^2/2}.
\end{equation}
\end{proposition}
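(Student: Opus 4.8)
The plan is to apply Theorem~\ref{theo:locstat} to the standardized white noise $\XXX$ over the compact set of rectangles $\RR$. The essential point is that $\XXX$ is locally self-similar of index $\alpha=1$, and to identify the corresponding high excursion intensity $\Lambda$ explicitly so that the prefactor $\int_{\RR}\Lambda(A)\,dA$ takes the stated product form. First I would verify local self-similarity. Identifying a rectangle $A=[\xx,\yy]$ with the point $(\xx,\yy)\in\R^{2d}$, we have a Gaussian field on a domain $D\subset\R^{2d}$, so the relevant dimension in Theorem~\ref{theo:locstat} is $2d$, not $d$. From the tensor-product form~\eqref{eq:cov_XXX_prod_form} of the covariance and the one-dimensional local structure~\eqref{eq:loc_struct_onedim}, a short computation gives
\begin{equation}\label{eq:loc_struct_crect}
C_{(\xx,\yy)}((\pp,\qq))=\sum_{i=1}^d \frac12\cdot\frac{|p_i|+|q_i|}{y_i-x_i},
\end{equation}
since $1-\prod_i r_{\U}^{(i)}\sim\sum_i(1-r_{\U}^{(i)})$ as the increments $(\pp,\qq)\to 0$. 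This is a strictly positive homogeneous function of order $\alpha=1$ in the $2d$ variables $(\pp,\qq)$, and it depends continuously on $(\xx,\yy)$, so both conditions of Definition~\ref{def:loc_self_sim} hold. With $\alpha=1$ and parameter dimension $2d$, the exponent in~\eqref{eq:chan_lai} becomes $2\cdot 2d/\alpha-1=4d-1$, matching the claim.

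The next and main step is to compute $\Lambda(\xx,\yy)$ from~\eqref{eq:def_high_cross_int}. The tangent process $Y_{(\xx,\yy)}$ has $\E Y=-C$ and covariance $C(s_1)+C(s_2)-C(s_1-s_2)$ built from~\eqref{eq:loc_struct_crect}. Because $C$ splits as a sum over the $2d$ coordinate directions with coefficients depending only on the side lengths $y_i-x_i$, the tangent field decomposes into a sum of $2d$ independent one-dimensional tangent processes, one for each coordinate of $(\pp,\qq)$. Each such one-dimensional piece is, up to the coefficient $\tfrac12/(y_i-x_i)$, the tangent process associated with the $\alpha=1$ local structure $C(s)=c|s|$; its Pickands-type constant is governed by $H_1=1$. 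The key mechanism is the scaling property of $\Lambda$: if $C_t$ is rescaled by a positive factor, $\Lambda$ scales by an explicit power of that factor. Concretely, I expect that the separable structure yields
\begin{equation}
\Lambda(\xx,\yy)=\kappa_d\prod_{i=1}^d\frac{1}{(y_i-x_i)^2}
\end{equation}
for some absolute constant $\kappa_d>0$ (the two coordinates $p_i,q_i$ attached to side $i$ together contribute the factor $(y_i-x_i)^{-2}$, matching the integrand in the statement). The homogeneity-based evaluation of $\Lambda$ for the linear local structure, using $H_1=1$, should pin down $\kappa_d=4^{-d}$, giving the prefactor $4^{-d}/\sqrt{2\pi}$ in the statement.

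The main obstacle is the rigorous evaluation of the constant, i.e.\ showing that the expectation-of-supremum limit~\eqref{eq:def_high_cross_int} factorizes across the $2d$ directions and that each factor reduces, via the $\alpha=1$ self-similar scaling, to a quantity expressible through $H_1=1$ and the side length. Two technical points need care: the supremum in~\eqref{eq:def_high_cross_int} is over the coupled cube $[0,T]^{2d}$, so factorization of $\E[\exp(\sup\cdots)]$ into a product of one-dimensional expectations requires the independence of the coordinate tangent processes together with the product form of the exponential of a sum; and the passage from the generic Pickands constant to the explicit value relies on the known evaluation $H_1=1$ for the linear ($\alpha=1$) case. Everything else — local self-similarity, the value $4d-1$ of the exponent, continuity and positivity of $\Lambda$ on $\RR$, and the integrability of $\prod_i(y_i-x_i)^{-2}$ over the compact set $\RR$ of nondegenerate rectangles — follows directly from Theorem~\ref{theo:locstat} and the explicit local structure~\eqref{eq:loc_struct_crect}.
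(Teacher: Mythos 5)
Your proposal is correct and follows essentially the same route as the paper: establish local self-similarity with $\alpha=1$ on the $2d$-dimensional parameter space, note that the tangent process splits into $2d$ independent drifted Brownian motions (one per coordinate of $(\pp,\qq)$, with time scaled by $y_i-x_i$), and factor the limit in~\eqref{eq:def_high_cross_int} into one-dimensional Pickands limits evaluated via $H_1=1$. The one step you leave as an expectation rather than a computation --- pinning down $\kappa_d=4^{-d}$ --- is exactly the change of variables $p_i\mapsto 2(y_i-x_i)p_i$, $q_i\mapsto 2(y_i-x_i)q_i$ that the paper carries out, and the value you anticipate is the correct one.
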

\begin{proof}
It follows from Definition~\ref{def:loc_self_sim}, Eqn.~\eqref{eq:loc_struct_onedim} and the tensor product structure~\eqref{eq:cov_XXX_prod_form} that the random field $\{\XXX(A), A\in \CRect\}$ is locally self-similar with index $\alpha=1$ and its local structure is given by
\begin{equation}\label{eq:local_rect}
C_{\xx,\yy}(\pp,\qq)=\frac 12\sum_{i=1}^d \frac{|p_{i}|+|q_{i}|}{y_i-x_i},\;\;\; \pp,\qq\in\R^d.
\end{equation}
%where $t=[\xx,\yy]\in \CRect$ and  $s=(\pp,\qq)\in \R^{2d}$.
For $1\leq i\leq d$ let $\{V_i(s),s\in\R\}$ and $\{W_i(s), s\in\R\}$  be independent standard Brownian motions with drift $-|s|/2$.  It follows from~\eqref{eq:def_proc_Y} and~\eqref{eq:local_rect} that the tangent process of $\{\XXX(A), A\in\CRect\}$ at $[\xx,\yy]\in\CRect$ is given  (in distribution) by
\begin{equation}\label{eq:def_tang_rect}
Y_{\xx,\yy}(\pp,\qq)=
\sum_{i=1}^d \left(V_i\left(\frac{p_i}{y_i-x_i}\right)+W_i\left(\frac{q_i}{y_i-x_i}\right)\right),
\;\;\;\pp,\qq\in\R^d.
\end{equation}
We compute the high excursion intensity $\Lambda$ given in~\eqref{eq:def_high_cross_int}. Using a simple change of variables and the independence of the processes $V_1,W_1,\ldots,V_d,W_d$, we obtain that
\begin{align*}
\Lambda(\xx,\yy)
&=
\frac{1}{4^d\prod_{i=1}^d(y_i-x_i)^2}
\lim_{T\to\infty}\frac{1}{T^{2d}}\E\left[\exp\sup_{\pp,\qq\in[0,T]^{d}}\sum_{i=1}^d(V_i(2p_i)+W_i(2q_i)) \right]\\
&=
\frac{1}{4^d\prod_{i=1}^d(y_i-x_i)^2}
\left\{\lim_{T\to\infty}\frac{1}{T}
\E\left[\exp\sup_{p_1\in[0,T]}V_1(2p_1) \right]\right\}^{2d}.
\end{align*}
The limit on the right-hand side is the Pickands constant $H_1=1$; see~\cite{P1}. Hence, $\Lambda(\xx,\yy)=4^{-d}\prod_{i=1}^d(y_i-x_i)^{-2}$. The proposition follows now by  Theorem~\ref{theo:locstat}.
\end{proof}

Next we prove a similar result for the maximum of the standardized white noise over a subset of the space of cubes. We identify a cube $[\xx,\xx+h]$ with the point $(\xx,h)\in \R^d\times (0,\infty)$. %The set of all cubes $\CCube$ can be thus identified with $\R^d\times \R_{+}$.
\begin{proposition}\label{theo:tail_ccube}
%Let $\{\XXX(R),R\in\CCube\}$ be the field of standardized white noise increments over cubes defined above.
Let $\{\XXX(A), A\in\CRect\}$ be the standardized white noise as in~\eqref{eq:def_XXX}. Let $\RR\subset \CCube$ be a compact subset of the space of cubes $\CCube$ having a positive Jordan measure. There is a constant $E_d>0$ such that as $u\to+\infty$,
\begin{equation}
\P\left [\sup_{A\in \RR} \XXX(A) > u \right ]\sim  \frac{E_d}{\sqrt{2\pi}} \cdot\left(\int_{\RR}\frac {d\xx dh}{h^{d+1}}\right) u^{2d+1} e^{- u^2/2}.
\end{equation}
\end{proposition}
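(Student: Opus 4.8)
The plan is to recognize that the standardized white noise, once restricted to the cube family $\CCube$, is a locally self-similar field on the $(d+1)$-dimensional parameter space $\R^d\times(0,\infty)$ (rather than on the $2d$-dimensional rectangle space of Proposition~\ref{theo:tail_crect}), and then to invoke Theorem~\ref{theo:locstat} with parameter dimension $d+1$ and index $\alpha=1$. The only genuinely new ingredient will be the constant $E_d$, which replaces the product of Pickands' constants appearing in the rectangle case. First I would identify a cube with the point $(\xx,h)\in\R^d\times(0,\infty)$ and compute its local structure. A nearby cube is obtained by shifting the corner by $\pp\in\R^d$ and the common side length by $\eta\in\R$; in the rectangle parametrization this perturbs the $i$-th lower endpoint by $p_i$ and the $i$-th upper endpoint by $p_i+\eta$. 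Substituting $q_i=p_i+\eta$ and $y_i-x_i=h$ into the rectangle local structure~\eqref{eq:local_rect} gives
\begin{equation*}
C_{\xx,h}(\pp,\eta)=\frac{1}{2h}\sum_{i=1}^d\left(|p_i|+|p_i+\eta|\right),\qquad (\pp,\eta)\in\R^{d+1}.
\end{equation*}
This function is homogeneous of order $1$, and since $|p_i|+|p_i+\eta|\geq|\eta|$ while it equals $2|p_i|$ when $\eta=0$, it is strictly positive off the origin, hence lies in $H^+(1)$; its continuous dependence on $(\xx,h)$ is immediate. The two conditions of Definition~\ref{def:loc_self_sim} are inherited from the rectangle field by restricting the increment to the cube directions, so $\{\XXX(A),A\in\CCube\}$ is locally self-similar of index $\alpha=1$ on the $(d+1)$-dimensional parameter space.

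Next I would write down the tangent process via~\eqref{eq:def_proc_Y}. Using the same independent Brownian motions $V_i,W_i$ with drift $-|s|/2$ as in~\eqref{eq:def_tang_rect}, and again setting $q_i=p_i+\eta$ and $y_i-x_i=h$, the tangent process at $(\xx,h)$ is
\begin{equation*}
Y_{\xx,h}(\pp,\eta)=\sum_{i=1}^d\left(V_i\!\left(\frac{p_i}{h}\right)+W_i\!\left(\frac{p_i+\eta}{h}\right)\right).
\end{equation*}
To evaluate the high excursion intensity~\eqref{eq:def_high_cross_int} over the $(d+1)$-dimensional box $[0,T]^{d+1}$, I would substitute $p_i\mapsto p_i/h$ and $\eta\mapsto\eta/h$. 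This rescales the box to $[0,T/h]^{d+1}$ and pulls a factor $h^{-(d+1)}$ out of the $T^{-(d+1)}$ normalization, giving $\Lambda(\xx,h)=E_d\,h^{-(d+1)}$, where
\begin{equation}\label{eq:def_Ed}
E_d=\lim_{T\to\infty}\frac{1}{T^{d+1}}\,\E\left[\exp\left(\sup_{(\pp,\eta)\in[0,T]^{d+1}}\sum_{i=1}^d\bigl(V_i(p_i)+W_i(p_i+\eta)\bigr)\right)\right].
\end{equation}
Finally, Theorem~\ref{theo:locstat}, applied with parameter dimension $d+1$ and $\alpha=1$, produces the exponent $\frac{2(d+1)}{1}-1=2d+1$ and the integrand $\Lambda(\xx,h)=E_d/h^{d+1}$, which is exactly the asserted asymptotics.

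The main obstacle is the constant $E_d$ itself. In the rectangle case the supremum decoupled across the $2d$ mutually independent coordinates, so $\Lambda$ factored as $\{H_1\}^{2d}=1$. Here the shift $\eta$ is shared by all $d$ summands $W_i(p_i+\eta)$, so the supremum in~\eqref{eq:def_Ed} does not factor and $E_d$ cannot be reduced to Pickands' constants; it is a genuinely new quantity, which is precisely why the theorem only asserts the existence of $E_d>0$. The one point that must be justified is that $E_d\in(0,\infty)$, and this is guaranteed by Theorem~\ref{theo:locstat}, which asserts that the high excursion intensity~\eqref{eq:def_high_cross_int} of any locally self-similar field exists and is finite and strictly positive.
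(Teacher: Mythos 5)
Your proposal is correct and follows essentially the same route as the paper: restrict the locally self-similar rectangle field to the $(d+1)$-dimensional cube subspace, read off the tangent process $Y_{\xx,h}(\pp,\eta)=\sum_{i=1}^d(V_i(p_i/h)+W_i((p_i+\eta)/h))$, rescale to get $\Lambda(\xx,h)=E_d h^{-(d+1)}$, and apply Theorem~\ref{theo:locstat} with parameter dimension $d+1$ and $\alpha=1$. Your explicit check that the restricted local structure stays in $H^+(1)$, and your remark that $E_d$ does not factor into Pickands constants because $\eta$ is shared across the $W_i$ terms, are points the paper leaves implicit, but the argument is the same.
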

\begin{proof}
The random field $\{\XXX(A), A\in \CCube\}$ is locally self-similar with index $\alpha=1$ since it is a restriction of a locally self-similar field $\{\XXX(A), A\in \CRect\}$ to a linear subspace. Let $V_i$ and $W_i$ be drifted Brownian motions as in the previous proof. The tangent process defined in~\eqref{eq:def_proc_Y} is given by
\begin{equation}\label{eq:def_tang_cube}
Y_{\xx,h}(\pp,g)=\sum_{i=1}^d \left(V_i\left(\frac{p_i}{h}\right)+W_i\left(\frac{p_i+g}{h}\right)\right),\;\;\; \pp\in\R^d, g\in\R.
\end{equation}
The  high excursion intensity $\Lambda(\xx,h)$ defined in~\eqref{eq:def_high_cross_int} is given by
\begin{align*}
\Lambda(\xx,h)
=
\lim_{T\to\infty} \frac{1}{T^{d+1}}\E\left[\exp
\sup_{\substack{\pp\in[0,T]^d\\g\in[0,T]}}\sum_{i=1}^d \left(V_i\left(\frac{p_i}{h}\right)+W_i\left(\frac{p_i+g}{h}\right)\right)\right].
\end{align*}
A change of variables shows that we have $\Lambda(\xx,h)=h^{-(d+1)}E_d$ where $E_d$ is a constant given by
\begin{equation}\label{eq:def_Ed}
E_d=\lim_{T\to\infty} \frac{1}{T^{d+1}}\E\left[\exp
\sup_{\substack{\pp\in[0,T]^d\\g\in[0,T]}}\sum_{i=1}^d \left(V_i\left(p_i\right)+W_i\left(p_i+g\right)\right)\right].
\end{equation}
The proof is completed by applying Theorem~\ref{theo:locstat}.
\end{proof}
Next we consider the maximum of the standardized Gaussian noise taken over a set of discrete rectangles. Let $\{B(s), s\geq 0\}$ be a standard Brownian motion. For $h,\ka>0$ define
\begin{equation}\label{eq:def_G}
G(h;\ka)=\frac 1{h^2} F^2\left(\frac{\ka}{h}\right),
\;\;\;
F(\ka)=\lim_{T\to\infty} \frac 1 T \E \left[\exp \sup_{s\in [0,T]\cap \ka\Z} \left(B(s)-\frac s2\right)\right].
\end{equation}
%where
%\begin{equation}\label{eq:def_Fa}
%\end{equation}
Using the fluctuation theory of random walks it can be shown, see~\cite{kabluchko_stand_gauss}, that with $\bar \Phi(u)=\frac{1}{\sqrt{2\pi}}\int_u^{\infty}e^{-w^2/2}dw$ denoting the tail  of the standard normal law,
\begin{equation}\label{eq:def_Fa1}
F(\ka)=\frac 1 {\ka} \exp\left\{-2\sum_{n=1}^{\infty}\frac 1 n \bar \Phi\left(\frac 12 \sqrt {\ka n}\right)\right\}.
\end{equation}
%We have $G(h;\ka)\sim 1/(4h^2)$ as $h\to+\infty$ and $\ka$ is fixed.  For fixed $h$, we have $\lim_{\ka\downarrow 0}G(h;\ka)=1/(4h^2)$.

\begin{proposition}\label{prop:drect}
Let $\{\XXX(A),A\in\CRect\}$ be the standardized white noise as in~\eqref{eq:def_XXX}. Let $\RR\subset  \CRect$ be a compact subset of the space of rectangles $\CRect$ having a positive Jordan measure. Let  $u\uparrow\infty$ and $q\downarrow 0$ in such a way that $q u^2\to \ka$ for some constant $\ka>0$. Then,
\begin{equation}
\P\left[\sup_{A\in \RR \cap q\Z^{2d}} \XXX(A)>u \right]
\sim
\frac 1 {\sqrt{2\pi}}\left(\int_{\RR} \prod_{i=1}^dG(y_i-x_i;\ka) d\xx d\yy \right) u^{4d-1}e^{-u^2/2}.
\end{equation}
\end{proposition}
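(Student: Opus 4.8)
The plan is to recognize Proposition~\ref{prop:drect} as the discrete-grid counterpart of Proposition~\ref{theo:tail_crect} and to derive it directly from the finite-mesh tail asymptotics of Theorem~\ref{theo:locstatdiscr}. In the proof of Proposition~\ref{theo:tail_crect} the standardized white noise $\{\XXX(A),A\in\CRect\}$ was shown to be locally self-similar of index $\alpha=1$ on the $2d$-dimensional parameter space $\CRect\cong\R^{2d}$, with local structure \eqref{eq:local_rect} and tangent process \eqref{eq:def_tang_rect}. With this identification, Theorem~\ref{theo:locstatdiscr} applies with $K=\RR$ (compact of positive Jordan measure by hypothesis) and ambient dimension $2d$: the scaling hypothesis $qu^{2/\alpha}\to\ka$ becomes exactly $qu^2\to\ka$, and the power of $u$ in \eqref{eq:chan_lai_discr} becomes $u^{2(2d)/\alpha-1}=u^{4d-1}$. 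Everything then reduces to evaluating the discrete high excursion intensity $\Lambda(\xx,\yy;\ka)$ from \eqref{eq:def_Ha} for the tangent process \eqref{eq:def_tang_rect} and showing it equals $\prod_{i=1}^d G(y_i-x_i;\ka)$.

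To compute $\Lambda(\xx,\yy;\ka)$ I would exploit the product structure already visible in \eqref{eq:def_tang_rect}. Since $[0,T]^{2d}\cap\ka\Z^{2d}=([0,T]\cap\ka\Z)^{2d}$ is a Cartesian product and the processes $V_1,W_1,\ldots,V_d,W_d$ are independent, the supremum of the sum in \eqref{eq:def_tang_rect} over this grid splits into a sum of $2d$ one-dimensional suprema, whence the expectation factorizes:
\[
\E\left[\exp\sup_{(\pp,\qq)\in[0,T]^{2d}\cap\ka\Z^{2d}}Y_{\xx,\yy}(\pp,\qq)\right]=\prod_{i=1}^d\E\!\left[\exp\sup_{p\in[0,T]\cap\ka\Z}V_i\!\left(\tfrac{p}{y_i-x_i}\right)\right]\E\!\left[\exp\sup_{q\in[0,T]\cap\ka\Z}W_i\!\left(\tfrac{q}{y_i-x_i}\right)\right].
\]
For a single factor I would substitute $s=p/(y_i-x_i)$, which maps the grid $[0,T]\cap\ka\Z$ onto $[0,T/(y_i-x_i)]\cap(\ka/(y_i-x_i))\Z$. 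Because $V_i(s)=B(s)-s/2$ in distribution for $s\ge0$, the definition \eqref{eq:def_G} of $F$ gives
\[
\E\!\left[\exp\sup_{p\in[0,T]\cap\ka\Z}V_i\!\left(\tfrac{p}{y_i-x_i}\right)\right]\sim\frac{T}{y_i-x_i}\,F\!\left(\frac{\ka}{y_i-x_i}\right),\qquad T\to\infty,
\]
and identically for the $W_i$-factor. Multiplying the $2d$ factors, dividing by $T^{2d}$ and letting $T\to\infty$ yields
\[
\Lambda(\xx,\yy;\ka)=\prod_{i=1}^d\frac{1}{(y_i-x_i)^2}\,F^2\!\left(\frac{\ka}{y_i-x_i}\right)=\prod_{i=1}^d G(y_i-x_i;\ka).
\]
Substituting this into \eqref{eq:chan_lai_discr} produces the claimed asymptotics, with the prefactor $1/\sqrt{2\pi}$ carried over unchanged.

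This is the exact discrete mirror of Proposition~\ref{theo:tail_crect}: there the $2d$ factors each converged to the Pickands constant $H_1=1$, whereas here the grid forces each factor to converge to the lattice constant $F(\ka/(y_i-x_i))$ instead. The step I expect to demand the most care is precisely this grid rescaling. The coordinate $p_i$ enters \eqref{eq:def_tang_rect} only through $p_i/(y_i-x_i)$, so the uniform mesh $\ka$ prescribed in \eqref{eq:def_Ha} is transformed into the location-dependent mesh $\ka/(y_i-x_i)$; this is exactly why $\Lambda(\cdot;\ka)$ depends on both $\ka$ and the side lengths (in contrast to the continuous intensity $\Lambda(\cdot)$) and why $G$ carries two arguments. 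One must verify that the substitution is compatible with the limit defining $F$ — the rescaled region $[0,T/(y_i-x_i)]$ still exhausts $[0,\infty)$ — and that interchanging the product over $i$ with the limit $T\to\infty$ is legitimate; both points are underwritten by the existence of $\Lambda(\cdot;\ka)$ in $(0,\infty)$ guaranteed abstractly by Theorem~\ref{theo:locstatdiscr}, together with the finiteness of $F$ recorded in \eqref{eq:def_Fa1}.
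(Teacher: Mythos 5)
Your proposal is correct and follows essentially the same route as the paper: both invoke Theorem~\ref{theo:locstatdiscr} on the $2d$-dimensional parameter space and reduce everything to computing $\Lambda(\xx,\yy;\ka)$ for the tangent process~\eqref{eq:def_tang_rect} via the independence of $V_1,W_1,\ldots,V_d,W_d$ and the change of variables that turns the mesh $\ka$ into $\ka/(y_i-x_i)$, yielding $\prod_{i=1}^d G(y_i-x_i;\ka)$ through the definition~\eqref{eq:def_G} of $F$. Your extra remarks on the legitimacy of interchanging the product with the limit $T\to\infty$ only make explicit what the paper leaves implicit.
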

\begin{proof}
%Write $s=(p_{1},q_{1},\ldots,p_{d},q_{d})\in \R^{2d}$.
We compute the function $\Lambda(\xx,\yy;\ka)$ given in Theorem~\ref{theo:locstatdiscr}. Recall that the tangent process is given by~\eqref{eq:def_tang_rect}. Setting $h_i:=y_i-x_i$ and making a change of variables, we obtain that $\Lambda(\xx,\yy;\ka)$ is equal to
\begin{align*}
\frac{1}{\prod_{i=1}^dh_i^2} \lim_{T\to\infty} \frac{1}{T^{2d}}\E \exp \left( \sum_{i=1}^d \sup_{\substack{p_i\in[0,T]\\p_i\in \ka h_i^{-1}\Z}}V_i(p_i)+\sum_{i=1}^d \sup_{\substack{q_i\in[0,T]\\q_i\in \ka h_i^{-1}\Z}} W_i(q_i)\right).
%&=
%\frac{1}{\prod_{i=1}^dh_i^2} \lim_{T\to\infty}\prod_{i=1}^d \frac 1T \E \exp \left( \sup_{\substack{p_i\in[0,T]\\p_i\in ah_i^{-1}\Z}}V_i(p_i)+\sum_{i=1}^d \sup_{\substack{q_i\in[0,T]\\q_i\in ah_i^{-1}\Z}} W_i(q_i)\right)
\end{align*}
Consequently, by the independence of $V_1,\ldots,V_d, W_1,\ldots,W_d$ and~\eqref{eq:def_G}, we obtain $\Lambda(\xx,\yy;\ka)=G(h_1;\ka)\ldots G(h_d;\ka)$. The proposition follows from Theorem~\ref{theo:locstatdiscr}.
\end{proof}

\begin{proposition}\label{prop:dcube}
Let $\{\XXX(A),A\in\CRect\}$ be the standardized white noise defined in~\eqref{eq:def_XXX}. Let $\RR\subset \CCube$ be a compact subset of the space of cubes $\CCube$ having a positive Jordan measure. Let  $u\uparrow\infty$ and $q\downarrow 0$ in such a way that $q u^2\to \ka$ for some constant $\ka>0$. Then, with a function $J_d(h;\ka)$ defined in~\eqref{eq:def_Jd_a} and~\eqref{eq:def_Ed_a} below,
\begin{equation}
\P\left[\sup_{A\in \RR \cap q\Z^{d+1}} \XXX(A)>u \right]
\sim
\frac 1 {\sqrt{2\pi}}\left(\int_{\RR}J_d(h; \ka) d\xx dh \right) u^{2d+1}e^{-u^2/2}.
\end{equation}
\end{proposition}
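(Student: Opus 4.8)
The plan is to follow the same route as in the proofs of Propositions~\ref{theo:tail_ccube} and~\ref{prop:drect}, combining the cube geometry of the former with the discrete high-excursion machinery of the latter. First I would recall, as already observed in the proof of Proposition~\ref{theo:tail_ccube}, that the restriction of the standardized white noise to the space of cubes $\CCube\cong\R^d\times(0,\infty)$ is locally self-similar of index $\alpha=1$, with tangent process at $(\xx,h)$ given by~\eqref{eq:def_tang_cube}. Since the hypotheses $q\downarrow0$, $u\uparrow\infty$ with $qu^2\to\ka$ are exactly the conditions of Theorem~\ref{theo:locstatdiscr} when $\alpha=1$, that theorem applies verbatim on the $(d+1)$-dimensional parameter space. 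It produces the asserted tail, with the power $u^{2(d+1)-1}=u^{2d+1}$ and with the discrete high-excursion intensity $\Lambda(\xx,h;\ka)$ as integrand; it then remains only to identify $\Lambda(\xx,h;\ka)$ with the claimed function $J_d(h;\ka)$.

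The core computation is to evaluate, from~\eqref{eq:def_Ha} applied in dimension $d+1$,
\[
\Lambda(\xx,h;\ka)=\lim_{T\to\infty}\frac{1}{T^{d+1}}\,\E\left[\exp\sup_{\substack{\pp\in[0,T]^d\cap\ka\Z^d\\ g\in[0,T]\cap\ka\Z}}\sum_{i=1}^d\left(V_i\!\left(\tfrac{p_i}{h}\right)+W_i\!\left(\tfrac{p_i+g}{h}\right)\right)\right],
\]
where $V_i,W_i$ are the drifted Brownian motions of the earlier proofs. I would substitute $p_i=h\tilde p_i$ and $g=h\tilde g$; this rescales the box $[0,T]^{d+1}$ to $[0,T/h]^{d+1}$ and turns each lattice $\ka\Z$ into $(\ka/h)\Z$, while absorbing the rescaling of $T$ into the normalization $T^{-(d+1)}$ extracts a factor $h^{-(d+1)}$ out front, exactly as in the continuous cube case. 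The effect is that $\xx$ disappears and the intensity depends on $h$ and $\ka$ only through the effective mesh $\ka/h$, yielding $\Lambda(\xx,h;\ka)=J_d(h;\ka)$ with $J_d$ defined by~\eqref{eq:def_Jd_a}--\eqref{eq:def_Ed_a}, namely $h^{-(d+1)}$ times the discrete analogue of the constant $E_d$ from~\eqref{eq:def_Ed} in which the box $[0,T]^{d+1}$ is replaced by its intersection with the lattice $(\ka/h)\Z^{d+1}$. Finiteness and positivity of this limit are guaranteed by Theorem~\ref{theo:locstatdiscr} itself, so no separate estimate is required.

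The one genuine difference from the rectangle case of Proposition~\ref{prop:drect}, and the only point to watch, is that the supremum here does \emph{not} factorize across the $d$ coordinate directions. In~\eqref{eq:def_tang_rect} the width variables are independent, so the supremum splits into $2d$ separate one-dimensional Pickands-type suprema and one obtains the clean product $G(h_1;\ka)\cdots G(h_d;\ka)$. For cubes the single side-length increment $g$ is shared by all the terms $W_i\!\left(\tfrac{p_i+g}{h}\right)$, coupling the directions together; consequently $J_d(h;\ka)$ cannot be reduced to a product of one-dimensional constants and must be left as the genuinely $(d+1)$-dimensional limit in~\eqref{eq:def_Ed_a}. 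This coupling is the only structural obstacle; once it is acknowledged, the change of variables and the invocation of Theorem~\ref{theo:locstatdiscr} complete the proof.
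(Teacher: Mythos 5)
Your proposal is correct and follows essentially the same route as the paper: identify the tangent process~\eqref{eq:def_tang_cube}, apply Theorem~\ref{theo:locstatdiscr} on the $(d+1)$-dimensional parameter space, and use the change of variables $p_i=h\tilde p_i$, $g=h\tilde g$ to identify $\Lambda(\xx,h;\ka)$ with $h^{-(d+1)}E_d(\ka/h)=J_d(h;\ka)$. Your closing observation that the shared side-length increment $g$ prevents the product factorization available in the rectangle case is accurate and consistent with the paper's definition of $E_d(\ka)$ as a genuinely $(d+1)$-dimensional limit.
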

\begin{proof}
Recall that the tangent process is given by~\eqref{eq:def_tang_cube}. The high excursion intensity $\Lambda(\xx,h;\ka)$ defined in Theorem~\ref{theo:locstatdiscr} is given by
\begin{align*}
\Lambda(\xx,h;\ka)
=
\lim_{T\to\infty} \frac{1}{T^{d+1}}\E\left[\exp
\sup_{\substack{\pp\in[0,T]^d\cap \ka\Z^d\\g\in[0,T]\cap \ka\Z}}\sum_{i=1}^d \left(V_i\left(\frac{p_i}{h}\right)+W_i\left(\frac{p_i+g}{h}\right)\right)\right].
\end{align*}
By a change of variables,  we have $\Lambda(\xx,h;\ka)=J_d(h;\ka)$, where
\begin{align}
J_d(h;\ka)&=h^{-(d+1)}E_d(\ka/h),\label{eq:def_Jd_a}\\
E_d(\ka)&=\lim_{T\to\infty} \frac{1}{T^{d+1}}\E\left[\exp
\sup_{\substack{\pp\in[0,T]^d\cap \ka\Z^d\\g\in[0,T]\cap \ka\Z}}\sum_{i=1}^d \left(V_i\left(p_i\right)+W_i\left(p_i+g\right)\right)\right].\label{eq:def_Ed_a}
\end{align}
The proposition follows from Theorem~\ref{theo:locstatdiscr}.
\end{proof}
\begin{lemma}\label{lem:JG_finite}
Define $J_d(h)=J_d(h;2d)$ by~\eqref{eq:def_Jd_a},\eqref{eq:def_Ed_a} and $G_d(h)=G(h;2d)$ by~\eqref{eq:def_G} with $\kappa=2d$. Then, $J_d:=\int_{0}^{\infty}J_d(h)dh<\infty$ and $G_d:=\int_0^{\infty}G_d(h)dh<\infty$.
%\begin{equation}\label{eq:def_int_JG}
%J_d:=\int_{0}^{\infty}J_d(h)dh<\infty,\;\;\; G_d:=\int_0^{\infty}G_d(h)dh<\infty.
%\end{equation}
\end{lemma}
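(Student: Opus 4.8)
The plan is to reduce both finiteness statements to elementary tail estimates for the one-parameter functions $F(\ka)$ and $E_d(\ka)$, for which I will prove a single uniform upper bound valid for all $\ka>0$, and then combine it with the boundedness near $\ka=0$ that is already contained in Theorem~\ref{theo:locstatdiscr}. Concretely, substituting $\ka=2d/h$ (equivalently, splitting the $h$-integral at $h=1$), the integrals $\int_0^\infty G_d(h)\,dh$ and $\int_0^\infty J_d(h)\,dh$ become, up to the constants $\tfrac1{2d}$ and $(2d)^{-d}$, the integrals $\int_0^\infty F(\ka)^2\,d\ka$ and $\int_0^\infty \ka^{d-1}E_d(\ka)\,d\ka$. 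Thus it suffices to control $F(\ka)^2$ and $\ka^{d-1}E_d(\ka)$ at both ends.

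The heart of the argument is the bound
\[
E_d(\ka)\le \ka^{-(d+1)},\qquad F(\ka)\le \ka^{-1}\qquad(\ka>0).
\]
To prove the first, I start from~\eqref{eq:def_Ed_a}. For fixed $g$ the inner supremum over $\pp$ factorizes across coordinates, so $\sup_{\pp}\sum_i\bigl(V_i(p_i)+W_i(p_i+g)\bigr)=\sum_i M_i(g)$ with $M_i(g)=\sup_{p\in[0,T]\cap \ka\Z}\bigl(V_i(p)+W_i(p+g)\bigr)$, and the whole supremum equals $\sup_{g\in[0,T]\cap \ka\Z}\sum_i M_i(g)$. Using $\exp\sup_g(\cdot)\le\sum_g\exp(\cdot)$ together with the independence of the pairs $(V_i,W_i)$ over $i$,
\[
\E\left[\exp\sup_{\substack{\pp\in[0,T]^d\cap \ka\Z^d\\ g\in[0,T]\cap \ka\Z}}\sum_{i=1}^d\bigl(V_i(p_i)+W_i(p_i+g)\bigr)\right]\le \sum_{g\in[0,T]\cap \ka\Z}\ \prod_{i=1}^d\E\bigl[e^{M_i(g)}\bigr].
\]
Bounding each lattice supremum $M_i(g)$ again by the corresponding sum, and using that for $p,g\ge0$ the variable $V_i(p)+W_i(p+g)$ is Gaussian with mean $-p-\tfrac g2$ and variance $2p+g$, so that $\E\bigl[e^{V_i(p)+W_i(p+g)}\bigr]=\exp\{-p-\tfrac g2+\tfrac12(2p+g)\}=1$, I get $\E[e^{M_i(g)}]\le N_T:=\#([0,T]\cap \ka\Z)$. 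Since both the number of admissible $g$ and of admissible $p$ equal $N_T\sim T/\ka$, the left-hand side is at most $N_T^{d+1}$; dividing by $T^{d+1}$ and letting $T\to\infty$ yields $E_d(\ka)\le \ka^{-(d+1)}$. The bound $F(\ka)\le \ka^{-1}$ is the one-dimensional specialization of the same computation ($\E[e^{B(s)-s/2}]=1$), and is also immediate from~\eqref{eq:def_Fa1}, where the exponential factor is $\le1$.

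With these bounds the integrals are routine. For $\int_0^\infty G_d(h)\,dh=\int_0^\infty h^{-2}F(2d/h)^2\,dh$ I split at $h=1$: on $(0,1]$ the bound $F(2d/h)\le h/(2d)$ cancels the singular factor $h^{-2}$, leaving an integrand bounded by $(2d)^{-2}$; on $[1,\infty)$ the factor $h^{-2}$ is integrable and $F$ is bounded on $(0,2d]$ (near $0$ by its finite limit $\lim_{\ka\to0}F(\ka)$, which exists by Theorem~\ref{theo:locstatdiscr} applied to Proposition~\ref{prop:drect} or directly from~\eqref{eq:def_Fa1}, and on $[\delta,2d]$ by $\ka^{-1}\le\delta^{-1}$). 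Likewise $\int_0^\infty J_d(h)\,dh=\int_0^\infty h^{-(d+1)}E_d(2d/h)\,dh$ splits at $h=1$: on $(0,1]$ the bound $E_d(2d/h)\le(h/2d)^{d+1}$ cancels $h^{-(d+1)}$; on $[1,\infty)$ the factor $h^{-(d+1)}$ is integrable (here $d\ge1$) and $E_d(\cdot)$ is bounded on $(0,2d]$, near $0$ by the finite limit $\lim_{\ka\to0}E_d(\ka)=E_d$ supplied by Theorem~\ref{theo:locstatdiscr}/Proposition~\ref{prop:dcube}, and away from $0$ by the uniform bound. Hence $G_d,J_d<\infty$.

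The step I expect to be the main obstacle is the uniform bound on $E_d(\ka)$, and in particular the fact that the width variable $g$ is shared by all $d$ coordinates. Decoupling $g$ — bounding the supremum by $\sum_i\sup_{p_i}V_i(p_i)+\sum_i\sup_{r_i}W_i(r_i)$ — overcounts and produces the normalization $T^{2d}$ instead of $T^{d+1}$, so it diverges for $d\ge2$. The device that respects the correct scaling is to keep $g$ as the outermost supremum, factorize only the inner $\pp$-supremum, and pass from $\sup_g$ to $\sum_g$ only at the very end; combined with the unit-mean identity $\E[e^{V_i(p)+W_i(p+g)}]=1$ this produces exactly the sharp exponent $d+1$. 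The only inputs left unproved — the finiteness of the limits of $F$ and $E_d$ at $\ka=0$ — are furnished by the convergence statement $\lim_{\ka\downarrow0}\Lambda(t;\ka)=\Lambda(t)\in(0,\infty)$ of Theorem~\ref{theo:locstatdiscr}.
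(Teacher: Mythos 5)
Your proof is correct, and it is in fact more complete than the paper's own. The paper's proof is three lines: it quotes the finite limits $\lim_{\ka\downarrow 0}F(\ka)=1/2$ and $\lim_{\ka\downarrow 0}E_d(\ka)=E_d$, deduces $G_d(h)\sim 1/(4h^2)$ and $J_d(h)\sim E_d/h^{d+1}$ as $h\uparrow+\infty$, and concludes. That settles integrability at $h=+\infty$ only; the behaviour as $h\downarrow 0$ (equivalently $\ka=2d/h\to\infty$), where the prefactors $h^{-2}$ and $h^{-(d+1)}$ are singular, is left entirely implicit. You treat the $h\to\infty$ end by the same mechanism (the finite limits at $\ka=0$, imported from Theorem~\ref{theo:locstatdiscr}), but you additionally supply the uniform bounds $F(\ka)\le\ka^{-1}$ and $E_d(\ka)\le\ka^{-(d+1)}$ via the union bound over the lattice combined with the unit-mean identity $\E\bigl[e^{V_i(p)+W_i(p+g)}\bigr]=1$; these are exactly what is needed near $h=0$, and your observation that the shared width variable $g$ must be kept as the outermost supremum --- factorizing only the inner $\pp$-supremum --- to obtain the exponent $d+1$ rather than the useless $2d$ is the correct and non-obvious point. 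So your argument is a superset of the paper's: identical at infinity, plus an explicit (and genuinely necessary) estimate at zero that the paper glosses over. One cosmetic caveat: calling the finiteness of $\lim_{\ka\to0}F(\ka)$ ``immediate from~\eqref{eq:def_Fa1}'' is optimistic, since one must check that the exponential factor there is $O(\ka)$; but your alternative justification via $\lim_{\ka\downarrow 0}\Lambda(t;\ka)=\Lambda(t)\in(0,\infty)$ in Theorem~\ref{theo:locstatdiscr} is the same external input the paper itself relies on, so nothing essential is missing.
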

\begin{proof}
We have $\lim_{\ka\downarrow 0}F(\kappa)=1/2$ by~\cite[Ch.~12]{Lead}  and $\lim_{\ka\downarrow 0}E_d(\kappa)=E_d$ by~\cite{ChL}. It follows that $G_d(h)\sim 1/(4h^2)$ and $J_d(h)\sim E_d/h^{d+1}$ as $h\uparrow +\infty$. Hence, $G_d<\infty$ and $J_d<\infty$.
\end{proof}

\section{A Poisson limit theorem for dependent events}\label{sec:poisson}
In this section we recall a Poisson limit theorem for finite-range dependent random  events which will be needed in our proofs. A more general statement can be found in~\cite[Thm.~1]{arratia_etal}.  For every $N\in\N$ let $E_{1N},\ldots, E_{NN}$ be (in general, dependent) random events such that $\P[E_{iN}]=p_N$ for all $1\leq i\leq N$, where $p_N$ is a sequence satisfying $\lambda:=\lim_{N\to\infty} Np_N\in (0,\infty)$. %If the events $E_{1n},\ldots,E_{nn}$ are independent, then the random variable $\sum_{i=1}^n 1_{E_{in}}$ counting the number of events which occur converges to the Poisson distribution with mean $\lambda$ by the Poisson limit theorem. This results remain true if the independence of the events is replaced by an appropriate notion of weak dependence.
We assume that the events $E_{1N},\ldots, E_{NN}$ are finite-range dependent in the following sense: there exist $B_{1N},\ldots,B_{NN}$, subsets of $\{1,\ldots,N\}$, such that the following three conditions are satisfied:
\begin{enumerate}
\item \label{cond:poi1} There is a constant $C>0$ not depending on $N$ such that $|B_{iN}|<C$ for every  $1\leq i\leq N$ and $N\in\N$.
\item \label{cond:poi2} For every $1\leq i\leq N$, the random event $E_{iN}$ is independent of the collection  $\{E_{jN}, j\notin B_{iN}\}$.
\item \label{cond:poi3} We have $\lim_{N\to\infty}\sum_{i=1}^N\sum_{j\in B_{iN}\backslash \{i\}} \P[E_{iN}\cap E_{jN}]=0.$
\end{enumerate}
%$\max_{1\leq i\leq n}|B_{in}|<C$ for some $C$ not depending on $n$ and such that for every $1\leq i\leq n$, the random event $E_{in}$ is independent of the collection  $\{E_{jn}, j\notin B_{in}\}$. Assume that
%\begin{equation}\label{eq:cond_poisson}
%\lim_{n\to\infty}\sum_{i=1}^n\sum_{j\in B_{in}\backslash \{i\}} \P[E_{in}\cap E_{jn}]=0.
%\end{equation}
\begin{theorem}\label{theo:poisson}
Under the above assumptions, the distribution of the  random variable $\sum_{i=1}^N \boldsymbol{1}_{E_{iN}}$ converges as $N\to\infty$ to the Poisson distribution with mean $\lambda$. In particular,
\begin{equation}
\lim_{N\to\infty}\P\left[\cap_{i=1}^N E_{iN}^c\right]=e^{-\lambda}.
\end{equation}
%$\lim_{n\to\infty}\P[\sum_{i=1}^n W_{in}=k]=e^{-\lambda}\frac{\lambda^k}{k!}$ for every $k\in\N_0$.
\end{theorem}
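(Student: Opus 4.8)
The plan is to deduce the statement from the Chen--Stein method for Poisson approximation, in the quantitative form established by Arratia, Goldstein and Gordon~\cite{arratia_etal}. Write $W_N=\sum_{i=1}^N \boldsymbol 1_{E_{iN}}$ for the number of occurring events and put $\lambda_N=\E W_N=Np_N$, so that $\lambda_N\to\lambda$ by hypothesis. Taking the dependency neighbourhoods $B_{iN}$ as the prescribed index sets of dependence, the cited result bounds the total variation distance between the law of $W_N$ and the Poisson law of mean $\lambda_N$ by a universal constant times $b_1+b_2+b_3$, where
\begin{align*}
b_1&=\sum_{i=1}^N\sum_{j\in B_{iN}}p_N^2,\qquad
b_2=\sum_{i=1}^N\sum_{j\in B_{iN}\setminus\{i\}}\P[E_{iN}\cap E_{jN}],\\
b_3&=\sum_{i=1}^N\E\bigl|\E[\boldsymbol 1_{E_{iN}}-p_N\mid \sigma(E_{jN}:j\notin B_{iN})]\bigr|.
\end{align*}
It therefore suffices to show that each of $b_1,b_2,b_3$ tends to $0$ as $N\to\infty$; the three hypotheses of the theorem are tailored precisely to this end.

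First, $b_3=0$ identically: condition~\eqref{cond:poi2} asserts that $E_{iN}$ is independent of the family $\{E_{jN}:j\notin B_{iN}\}$, so the inner conditional expectation equals $\E\boldsymbol 1_{E_{iN}}-p_N=0$ almost surely. Second, $b_2\to 0$ is nothing but condition~\eqref{cond:poi3}. Third, for $b_1$ note that $Np_N\to\lambda\in(0,\infty)$ together with $N\to\infty$ forces $p_N\to 0$; since $|B_{iN}|<C$ uniformly by condition~\eqref{cond:poi1}, we obtain
$$
b_1\le \sum_{i=1}^N C p_N^2= C\,p_N\,(Np_N)\xrightarrow[N\to\infty]{} C\cdot 0\cdot\lambda=0.
$$

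Having shown $b_1+b_2+b_3\to 0$, the Chen--Stein bound yields $d_{\mathrm{TV}}(\mathcal L(W_N),\mathrm{Poi}(\lambda_N))\to 0$. Because $\lambda_N\to\lambda$ and $d_{\mathrm{TV}}(\mathrm{Poi}(\lambda_N),\mathrm{Poi}(\lambda))\le|\lambda_N-\lambda|$, the triangle inequality gives $d_{\mathrm{TV}}(\mathcal L(W_N),\mathrm{Poi}(\lambda))\to 0$, which is the first assertion (and in particular convergence in distribution). The final claim follows by specializing to the atom at $0$: since $\{W_N=0\}=\cap_{i=1}^N E_{iN}^c$, we get $\P[\cap_{i=1}^N E_{iN}^c]=\P[W_N=0]\to e^{-\lambda}$.

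The argument is essentially routine once the Chen--Stein machinery is invoked, and I do not expect a genuine obstacle. The only points requiring care are the verification that $b_3$ vanishes exactly (rather than merely asymptotically), which hinges on the \emph{exact} independence in condition~\eqref{cond:poi2} and not just on approximate independence, and the elementary bookkeeping turning the uniform bound on neighbourhood sizes into the estimate $b_1\le C\,p_N(Np_N)$. The conceptual content is simply the observation that conditions~\eqref{cond:poi1}, \eqref{cond:poi2} and~\eqref{cond:poi3} are designed to annihilate, respectively, the three error terms of the Arratia--Goldstein--Gordon bound.
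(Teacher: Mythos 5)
Your proposal is correct and coincides with the paper's own route: the paper simply recalls this statement from \cite[Thm.~1]{arratia_etal}, i.e.\ the Chen--Stein bound of Arratia, Goldstein and Gordon, and the three hypotheses are indeed designed to make $b_1$, $b_2$, $b_3$ vanish exactly as you verify. Your write-up just supplies the bookkeeping that the paper leaves to the citation.
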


\section{Proofs in the continuous-time case}\label{sec:proof_cont}
\subsection{Proof of Theorem~\ref{theo:main_ccube}}\label{sec:proof_ccube}
Let $\XXX(A)=\W(A)/\sqrt{|A|}$ be the standardized white noise. Recall that $\CCube$ is the set of all $d$-dimensional cubes and $\CCube_n$ is the set of all $d$-dimensional cubes contained in $[0,n]^d$. For $0\leq a<b\leq n$ let $\CCube_n(a,b)$ be the set of all cubes $[\xx,\xx+h]\in \CCube_n$ such that  $h\in [a,b]$.
\begin{lemma}\label{lem:ccube1}
Fix some $\tau\in\R$ and let $u_n=u_n(\tau)$ be defined by~\eqref{eq:def_ab_ccube}. We have
$$
\lim_{n\to\infty}\P\left[\sup_{A\in \CCube_n(a,b)} \XXX(A)\leq u_n \right]=e^{-e^{-\tau}\left(1-\left(\frac {a}{b}\right)^d\right)}.
$$
\end{lemma}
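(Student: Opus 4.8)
The plan is to prove Lemma~\ref{lem:ccube1} by the standard Poisson-clumping scheme: partition the parameter space into many small blocks, show that each block contributes an exceedance event of asymptotically equal small probability, verify that the blocks satisfy the finite-range dependence conditions of Theorem~\ref{theo:poisson}, and conclude that the number of exceeding blocks is asymptotically Poisson. The key input is the tail asymptotic from Proposition~\ref{theo:tail_ccube}, which gives the probability that the supremum of $\XXX$ over a fixed compact block of cubes exceeds a high level $u$.

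First I would compute the expected number of exceedances. By the local self-similarity, the natural clumping occurs at scale $q\asymp u_n^{-2}$ in the cube-parameter space $\R^d\times(0,\infty)$ (recall $\alpha=1$, so the relevant mesh for the $d+1$ parameters scales like $u^{-2}$). I would cover the admissible region $\CCube_n(a,b)$, which in parameter coordinates $(\xx,h)$ is roughly $\{\xx\in[0,n]^d\text{-ish}, h\in[a,b]\}$ (with the constraint that the cube fits inside $[0,n]^d$, so $\xx$ ranges over a box of side $\approx n-h$), by a grid of small blocks of the appropriate mesh. For each block, Proposition~\ref{theo:tail_ccube} gives an exceedance probability asymptotic to $\frac{E_d}{\sqrt{2\pi}}\left(\int_{\text{block}}\frac{d\xx\,dh}{h^{d+1}}\right)u_n^{2d+1}e^{-u_n^2/2}$. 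Summing over all blocks and using a Riemann-sum argument, the total expected number of exceedances is asymptotically
\begin{equation*}
\lambda_n=\frac{E_d}{\sqrt{2\pi}}\,u_n^{2d+1}e^{-u_n^2/2}\int_{\CCube_n(a,b)}\frac{d\xx\,dh}{h^{d+1}}.
\end{equation*}
The volume factor evaluates (to leading order) as $\int_a^b\left(\int_{[0,n-h]^d}d\xx\right)\frac{dh}{h^{d+1}}\sim n^d\int_a^b h^{-(d+1)}dh=\frac{n^d}{d}\left(a^{-d}-b^{-d}\right)$. Substituting the definition~\eqref{eq:def_ab_ccube} of $u_n(\tau)$ into $u_n^{2d+1}e^{-u_n^2/2}$ and tracking the leading-order terms (using $u_n^2\approx 2d\log n+(2d+1)\log(d\log n)+2\log\frac{2^dE_d}{da^d\sqrt\pi}+2\tau$), the powers of $n$, $\log n$, and the constants should all cancel precisely, leaving $\lambda_n\to e^{-\tau}\left(1-(a/b)^d\right)$. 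This is exactly why the normalization constants in~\eqref{eq:def_ab_ccube} are chosen as they are.

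Next I would verify the finite-range dependence hypotheses~\eqref{cond:poi1}--\eqref{cond:poi3} of Theorem~\ref{theo:poisson}. For condition~\eqref{cond:poi2}, the essential point is that the white noise $\W$ is independently scattered, so $\XXX(A)$ and $\XXX(A')$ are \emph{independent} whenever the cubes $A,A'$ are disjoint; hence two blocks whose cubes cannot overlap give independent exceedance events, and I can take $B_{iN}$ to consist of the boundedly-many neighbouring blocks whose cubes can intersect those of block $i$. Since the side lengths are bounded (by $b$) and the block mesh is $q\to 0$, this neighbourhood size is bounded uniformly, giving~\eqref{cond:poi1}. The main obstacle, and the step requiring the most care, is condition~\eqref{cond:poi3}, the negligibility of the double-exceedance sum $\sum_i\sum_{j\in B_{iN}\setminus\{i\}}\P[E_{iN}\cap E_{jN}]$. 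Here I would bound each pairwise term by a two-block exceedance probability and show, via an argument like the one behind Theorem~\ref{theo:locstat} applied to a doubled block (or a direct comparison estimate using that the covariance $r_\XXX<1$ strictly on the compact closure away from the diagonal), that the sum over adjacent blocks is of smaller order than $\lambda_n$ and tends to $0$; this is the usual ``no double maxima at neighbouring clumping sites'' estimate and is where local self-similarity of index $\alpha=1$ must be used to control the correlation of nearby cubes. Finally, applying Theorem~\ref{theo:poisson} with $\lambda=e^{-\tau}(1-(a/b)^d)$ yields $\P[\sup_{A\in\CCube_n(a,b)}\XXX(A)\leq u_n]=\P[\cap_i E_{iN}^c]\to e^{-\lambda}$, which is the claimed limit.
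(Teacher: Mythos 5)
Your overall architecture --- a first-moment computation via Proposition~\ref{theo:tail_ccube} combined with the Poisson limit theorem of Section~\ref{sec:poisson} --- is the same as the paper's, and your evaluation of the expected number of exceedances, including the cancellation of the constants built into~\eqref{eq:def_ab_ccube}, is correct. The genuine gap is your choice of block scale. You partition the parameter space into blocks of mesh $q\asymp u_n^{-2}$, i.e.\ at the clump scale, and this breaks the argument in three places. First, Proposition~\ref{theo:tail_ccube} is an asymptotic statement for a \emph{fixed} compact set $\RR$ as $u\to\infty$; it cannot be applied to blocks shrinking with $n$ without a separate uniformity argument. Second, condition~\ref{cond:poi1} fails, contrary to your claim: two blocks are dependent whenever their cubes can intersect, i.e.\ whenever the blocks lie within distance of order $b$ of each other, and with mesh $q\to 0$ this is of order $(b/q)^{d+1}\to\infty$ blocks, not a bounded number. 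Third, and most seriously, condition~\ref{cond:poi3} genuinely fails at clump scale: for adjacent blocks lying inside a single excursion clump one has $\P[E_{iN}\cap E_{jN}]$ comparable to $\P[E_{iN}]$ (that is precisely what ``clumping'' means), so the double sum is of order $Np_N=\lambda$ rather than $o(1)$. The Poisson structure only emerges at scales much larger than the clump size, so no choice of shrinking mesh can satisfy conditions~\ref{cond:poi1} and~\ref{cond:poi3} simultaneously with a nondegenerate limit.

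The repair is to take \emph{fixed-size} blocks, as the paper does: $\RR_{\kk}$ is the set of cubes with $h\in[a,b]$ and $x_i\in[k_i,k_i+1]$, $\kk\in\Z^d$. Then Proposition~\ref{theo:tail_ccube} applies directly to each block, giving $p_n\sim e^{-\tau}n^{-d}\left(1-(a/b)^d\right)$; the dependency range is $b+1$ in $\|\cdot\|_\infty$, so condition~\ref{cond:poi1} holds; and condition~\ref{cond:poi3} follows from a clean device that you mention only parenthetically (``a doubled block'') and do not carry out: apply Proposition~\ref{theo:tail_ccube} to the union $\RR_{\kk_1}\cup\RR_{\kk_2}$, use additivity of the integral to get $\P[E_{\kk_1,n}\cup E_{\kk_2,n}]\sim \P[E_{\kk_1,n}]+\P[E_{\kk_2,n}]$, and conclude by inclusion--exclusion that $\P[E_{\kk_1,n}\cap E_{\kk_2,n}]=o(n^{-d})$ uniformly in $\kk_1,\kk_2$; no separate correlation estimate is needed. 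Finally, the boundary effect you note (``$\xx$ ranges over a box of side $\approx n-h$'') should be formalized by sandwiching the event between the intersections over $\kk\in[0,n]^d\cap\Z^d$ and over $\kk\in[0,n-b-1]^d\cap\Z^d$, both index sets having $\sim n^d$ elements.
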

\begin{proof}
For $\kk\in\Z^d$ define $\RR_{\kk}$ to be the set of cubes of the form $[\xx,\xx+h]\in\CCube$ such that $h\in [a,b]$ and $x_i\in [k_i, k_i+1]$ for every $1\leq i\leq d$. Let $E_{\kk,n}$ be the random event $\{\sup_{A\in \RR_{\kk}} \XXX(A)>u_n\}$.
Note that by the translation invariance, the probability $p_n:=\P[E_{\kk,n}]$ is independent of $\kk\in\Z^d$. By Proposition~\ref{theo:tail_ccube} and~\eqref{eq:def_ab_ccube}, we have as $n\to\infty$,
\begin{equation}\label{eq:eq1}
p_n%:=\P[E_{k,n}]  %\left\sup_{A\in \RR_k} \XXX(A)>u_n\right
\sim
\frac{E_d}{\sqrt{2\pi}}u_n^{2d+1} e^{-u_n^2/2} \int_{\RR_{\kk}}\frac{d\xx dh}{h^{d+1}}
\sim
\frac {da^de^{-\tau}} {n^d} \int_{a}^b \frac{dh}{h^{d+1}}
\sim
\frac {e^{-\tau}} {n^d}\left(1-\left(\frac ab\right)^d\right).
\end{equation}
To prove the lemma, we will verify the assumptions of Section~\ref{sec:poisson}. Define index sets  $K'_n=[0,n]^d\cap \Z^d$ and $K''_n=[0,n-b-1]^d\cap \Z^d$. Clearly, we have
\begin{equation}\label{eq:ccube_inclusion}
\bigcap_{\kk\in K'_n} E_{\kk,n}^c\subset \left\{\sup_{A\in \CCube_n(a,b)} \XXX(A)\leq u_n\right\}\subset \bigcap_{\kk\in K''_n} E_{\kk,n}^c.
\end{equation}
We will show that the families of random events $\{E_{\kk,n}, \kk\in K'_n\}$ and $\{E_{\kk,n}, \kk\in K''_n\}$ satisfy the assumptions of Theorem~\ref{theo:poisson}. Note that we have $|K_n'|\sim n^d$ and $|K_n''|\sim n^d$ as $n\to\infty$. Hence, by~\eqref{eq:eq1},
$$
\lim_{n\to\infty}p_n|K_n'|=\lim_{n\to\infty}p_n|K_n''|=e^{-\tau}\left(1-\left(\frac ab\right)^d\right).
$$
The events $E_{\kk,n}$ are finite-range dependent, i.e., $E_{\kk_1,n}$ and $E_{\kk_2,n}$ are independent provided that $\|\kk_1-\kk_2\|_{\infty}>b+1$. Hence, conditions~\ref{cond:poi1} and~\ref{cond:poi2} of Section~\ref{sec:poisson} are satisfied. By Proposition~\ref{theo:tail_ccube}, %Corollary~\ref{theo:tail_ccube},
\begin{equation}\label{eq:eq3}
\P[E_{\kk_1,n}\cup E_{\kk_2,n}]\sim
\frac{E_d}{\sqrt{2\pi}}u_n^{2d+1} e^{-u_n^2/2} \int_{\RR_{\kk_1}\cup\RR_{\kk_2}}\frac{d\xx dh}{h^{d+1}}
\sim
2\frac {e^{-\tau}} {n^d} \left(1-\left(\frac ab\right)^d\right).
\end{equation}
It follows from~\eqref{eq:eq1} and~\eqref{eq:eq3} that uniformly in $\kk_1,\kk_2\in \Z^d$, we have $\P[E_{\kk_1,n}\cap E_{\kk_2,n}]=o(1/n^{d})$ as $n\to\infty$. Together with the finite-range dependence, this implies that condition~\ref{cond:poi3} of Section~\ref{sec:poisson} is satisfied.  By Theorem~\ref{theo:poisson}, we have
\begin{equation}\label{eq:lem35}
\lim_{n\to\infty}\P\left[\cap_{\kk\in K'_n} E_{\kk,n}^c\right]
=
\lim_{n\to\infty}\P\left[\cap_{\kk\in K''_n} E_{\kk,n}^c\right]
=
e^{-e^{-\tau}\left(1-\left(\frac ab\right)^d\right)}.
\end{equation}
The statement of the lemma follows from~\eqref{eq:ccube_inclusion} and~\eqref{eq:lem35}.
\end{proof}

To prove Theorem~\ref{theo:main_ccube} we need to take the limit $b\to+\infty$ in Lemma~\ref{lem:ccube1}. The next lemma estimates the high excursion probability over $\CCube_n(b)=\CCube_n(b,n)$, the set of all cubes $[\xx,\xx+h]\in\CCube_n$ such that $h\geq b$.
\begin{lemma}\label{lem:ccube2}
%Let $u_n$ be a sequence such that $\lim_{n\to\infty}u_n=+\infty$.
There is a constant $C$ such that for every $0<b<n$ and $u>1$,
%The following holds for the high excursion probability over the set $\CCube_n(b,n)$:
$$
\P\left[\sup_{A\in \CCube_n(b,n)} \XXX(A)>u \right]
\leq
Cb^{-d} u^{2d+1}e^{-u^2/2}n^d.
$$
\end{lemma}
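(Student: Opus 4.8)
The plan is to reduce the estimate to a single ``unit'' tail bound coming from Proposition~\ref{theo:tail_ccube}, and then to spread this bound over the whole parameter region $\CCube_n(b,n)$ by means of the scale- and translation-invariance of $\XXX$, organized along dyadic scales of the side length. The heuristic is that the region $\CCube_n(b,n)$ has $\int h^{-(d+1)}\,d\xx\,dh\leq \tfrac1d b^{-d}n^d$, which already has the shape of the claimed bound; the work is to make this rigorous and uniform in $u$, $b$, $n$.

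The structural input I would record first is that $\XXX$ is self-similar and stationary: for every $s>0$ and $\boldsymbol{v}\in\R^d$ the rescaled, translated field $\{\XXX(sA+\boldsymbol{v}):A\in\CCube\}$ has the same law as $\{\XXX(A):A\in\CCube\}$. This is immediate from the covariance formula, since $\Cov(\W(sA_1+\boldsymbol{v}),\W(sA_2+\boldsymbol{v}))=s^d|A_1\cap A_2|$ and $|sA|=s^d|A|$ cancel in the ratio defining $\XXX$, so the covariance is unchanged, and equality of covariances of centred Gaussian fields gives equality in law.

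Next I would fix the canonical cell $\RR_0\subset\CCube$ consisting of all cubes $[\xx,\xx+h]$ with $\xx\in[0,1]^d$ and $h\in[1,2]$; this is compact with positive Jordan measure. By Proposition~\ref{theo:tail_ccube} there is a constant $C_0$ with $\P[\sup_{A\in\RR_0}\XXX(A)>u]\leq C_0 u^{2d+1}e^{-u^2/2}$ for all large $u$; since $u^{2d+1}e^{-u^2/2}$ is bounded below by a positive constant on any compact subset of $(1,\infty)$ and probabilities are at most $1$, after enlarging $C_0$ the same bound holds for all $u>1$. By the invariance recorded above, for every $s>0$ and $\boldsymbol{v}\in\R^d$ this same bound holds for the supremum over the scaled, translated cell $s\RR_0+\boldsymbol{v}$, that is, over all cubes with side length in $[s,2s]$ and lower corner in a spatial box of side $s$.

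Finally I would cover $\CCube_n(b,n)$ in parameter space by such cells. Decompose the side length dyadically, $h\in[2^jb,2^{j+1}b]$ for $0\leq j\leq J$ with $2^Jb\leq n<2^{J+1}b$, and for each $j$ cover the admissible spatial corners $\xx\in[0,n]^d$ by a grid of boxes of side $2^jb$, requiring at most $C(n/(2^jb))^d$ of them. Every cube in $\CCube_n(b,n)$ then lies in one of the resulting cells, so a union bound gives
\[
\P\Big[\sup_{A\in\CCube_n(b,n)}\XXX(A)>u\Big]
\leq \sum_{j=0}^{J} C\Big(\frac{n}{2^jb}\Big)^{d} C_0\, u^{2d+1}e^{-u^2/2},
\]
and since $\sum_{j\geq0}2^{-jd}<\infty$ the right-hand side is at most $C b^{-d}n^d u^{2d+1}e^{-u^2/2}$, which is the claim. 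The main point to get right is the uniformity: the single asymptotic statement of Proposition~\ref{theo:tail_ccube} must be upgraded to a per-cell bound valid for all $u>1$, and the dyadic organization must be arranged so that the cells at scale $j$ are exact rescalings of $\RR_0$ (so invariance applies) while their number decays like $2^{-jd}$. It is precisely this decay that forces the series to sum to the stated $b^{-d}n^d$ and makes the smallest scale $h\approx b$ dominant.
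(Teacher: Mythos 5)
Your proposal is correct and follows essentially the same route as the paper: a dyadic decomposition of the side length, a covering of each scale by $O\bigl((n/2^{j}b)^{d}\bigr)$ affine copies of a fixed unit cell, the per-cell tail bound from Proposition~\ref{theo:tail_ccube} transported by affine invariance, and a geometric series summing to $Cb^{-d}n^{d}$. Your explicit remark on upgrading the asymptotic statement of Proposition~\ref{theo:tail_ccube} to a bound valid for all $u>1$ is a point the paper leaves implicit, but the argument is the same.
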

\begin{proof}
For $\kk\in\Z^d$ and $l\in\Z$ denote by $\RR_{\kk,l}$ the set of all cubes $[\xx,\xx+h]\in \CCube$ such that $h\in [2^l,2^{l+1}]$ and $x_i\in [2^{l}k_i, 2^l(k_i+1)]$ for all $1\leq i\leq d$.
By  Proposition~\ref{theo:tail_ccube}, for every $u>1$,
\begin{equation}\label{eq:eq4}
\P\left[\sup_{A\in \RR_{\kk,l}} \XXX(A)>u\right ]
\leq
C u^{2d+1}e^{-u^2/2} \int_{\RR_{\kk,l}}\frac{d\xx dh}{h^{d+1}}
\leq
Cu^{2d+1}e^{-u^2/2}.
\end{equation}
Note that the constant $C$ is independent of $\kk,l$ since the left-hand side does not depend on $\kk,l$ by the affine invariance. Without restriction of generality, we may assume that $n=2^{L}$ and $b=2^{L_b}$ for some $L,L_b\in\Z$. Otherwise, we may replace $n$ by $2^{\lceil\log_2 n\rceil}$ and $b$ by $2^{\lfloor \log_2 b\rfloor}$.
%Let $l_0\in\Z$ be such that $2^{l_0}\leq b \leq 2^{l_0+1}$.
The set $\CCube_n(b,n)$ can be written as a union of sets of the form $\CCube_n(2^l,2^{l+1})$, $l=L_b,\ldots, L-1$. Now, the set $\CCube_n(2^l,2^{l+1})$ can be covered by $n^d/2^{ld}$ sets of the form $\RR_{\kk,l}$. Hence, we can cover the set $\CCube_n(b,n)$ by at most $\sum_{l=L_b}^{L-1} (n^d/2^{ld})\leq 2n^d/b^d$ sets of the form $\RR_{\kk,l}$. The statement of the lemma follows by applying to each of these sets~\eqref{eq:eq4}.
%Thus,
%$$
%\limsup_{n\to\infty}\P\left[\sup_{A\in \CCube_n(a)\backslash \CCube_n(a,L)} \XXX(A)>u_n \right] \leq  \frac {2e^{-\tau}}{L} .
%$$
%The statement of the lemma follows by letting $L\to+\infty$.
\end{proof}
We are now in position to complete the proof of Theorem~\ref{theo:main_ccube}. Let $u_n=u_n(\tau)$ be chosen as in~\eqref{eq:def_ab_ccube}. We have $\CCube_n(a,b)\subset \CCube_n(a,n)$ for every $0\leq a<b\leq n$. By Lemma~\ref{lem:ccube1} we obtain
\begin{align}
\limsup_{n\to\infty}\P\left[\sup_{A\in \CCube_n(a,n)} \XXX(A)\leq u_n \right]
&\leq
\lim_{n\to\infty}\P\left[\sup_{A\in \CCube_n(a,b)} \XXX(A)\leq u_n \right]\label{eq:eq5}\\
&=
e^{-e^{-\tau}\left(1-\left(\frac ab\right)^d \right)}.\notag
\end{align}
Let us prove a converse inequality. By~\eqref{eq:def_ab_ccube}, we have $u_n^{2d+1}e^{-u_n^2/2}\leq Cn^{-d}$. Note that $\CCube_n(a,n)\backslash \CCube_n(a,b)=\CCube_n(b,n)$. It follows from  Lemma~\ref{lem:ccube2} that
$$
\P\left[\sup_{A\in \CCube_n(a,n)\backslash \CCube_n(a,b)} \XXX(A)>u_n\right]
%=
%\P\left[\sup_{A\in \CCube_n(b,n)} \XXX(A)>u_n\right]
\leq
\frac C{b^d}.
$$
%Since $\CRect_n(a)\backslash \CRect_n(a,b)=\cup_{m=1}^d\CRect^m_n(a,b)$, we can chose $b$ so large that
%$$
%\P\left[\sup_{A\in \CRect_n(a)\backslash \CRect_n(a,b)} \XXX(A)>u_n \right]\leq \eps.
%$$
Consequently, by Lemma~\ref{lem:ccube1} we have
\begin{align}
\liminf_{n\to\infty}\P\left[\sup_{A\in \CCube_n(a,n)} \XXX(A)\leq u_n \right]
&\geq \lim_{n\to\infty}\P\left[\sup_{A\in \CCube_n(a,b)} \XXX(A)\leq u_n \right]-\frac C{b^d} \label{eq:eq6}\\
&= e^{-e^{-\tau}\left(1-\left(\frac ab\right)^d\right)}-\frac C{b^d}.\notag
\end{align}
The proof is completed by letting $b\to+\infty$ in~\eqref{eq:eq5} and~\eqref{eq:eq6}.

\subsection{Proof of Theorem~\ref{theo:main_crect}}
Let $\XXX(A)=\W(A)/\sqrt{|A|}$ be the standardized white noise. Recall that $\CRect$ is the set of all $d$-dimensional rectangles and $\CRect_n$ is the set of all $d$-dimensional rectangles contained in $[0,n]^d$. For $0\leq a<b\leq n$ let $\CRect_n(a,b)$ be the set of all rectangles $[\xx,\xx+\hh]\in \CRect_n$ such that $h_i\in [a,b]$ for all $1\leq i\leq d$.
\begin{lemma}\label{lem:crect1}
Fix some $\tau\in\R$ and let $u_n=u_n(\tau)$ be defined by~\eqref{eq:def_ab_crect}. Then,
$$
\lim_{n\to\infty}\P\left[\sup_{A\in \CRect_n(a,b)} \XXX(A)\leq u_n\right]=
e^{-e^{-\tau}\left(1-\frac{a}{b}\right)^d}.
$$
\end{lemma}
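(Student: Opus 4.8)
The plan is to mirror exactly the structure of the proof of Lemma~\ref{lem:ccube1}, replacing the role of Proposition~\ref{theo:tail_ccube} by Proposition~\ref{theo:tail_crect} and adjusting the geometric bookkeeping from cubes to rectangles. First I would tile the parameter space by integer translates: for $\kk\in\Z^d$ let $\RR_{\kk}$ be the set of rectangles $[\xx,\xx+\hh]\in\CRect$ with $h_i\in[a,b]$ and $x_i\in[k_i,k_i+1]$ for each $i$, and let $E_{\kk,n}=\{\sup_{A\in\RR_{\kk}}\XXX(A)>u_n\}$. By translation invariance $p_n:=\P[E_{\kk,n}]$ does not depend on $\kk$. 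The first computation is to feed $u_n$ from~\eqref{eq:def_ab_crect} into Proposition~\ref{theo:tail_crect}: writing $A=[\xx,\yy]$ with $y_i-x_i=h_i$, the relevant integral over a single cell is
\begin{equation*}
\int_{\RR_{\kk}}\frac{d\xx\,d\hh}{\prod_{i=1}^d h_i^2}
=\left(\int_a^b \frac{dh}{h^2}\right)^d
=\left(\frac1a-\frac1b\right)^d,
\end{equation*}
and the prefactor $\tfrac{1}{4^d\sqrt{2\pi}}u_n^{4d-1}e^{-u_n^2/2}$, once $u_n$ is substituted, should produce
\begin{equation*}
p_n\sim \frac{e^{-\tau}}{n^d}\left(1-\frac ab\right)^d.
\end{equation*}
This is precisely the constant-calibration step for which~\eqref{eq:def_ab_crect} was designed, so the normalization $u_n$ should cancel all the algebra against the $4^d$, the $a^d$, and the polynomial-in-$\log$ factors via Lemma~\ref{lem:extreme_value}.

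Next I would set up the Poisson limit theorem exactly as in Lemma~\ref{lem:ccube1}. Define the two index sets $K'_n=[0,n]^d\cap\Z^d$ and $K''_n=[0,n-b-1]^d\cap\Z^d$, both of cardinality asymptotic to $n^d$, giving the inclusion
\begin{equation*}
\bigcap_{\kk\in K'_n}E_{\kk,n}^c
\subset\left\{\sup_{A\in\CRect_n(a,b)}\XXX(A)\leq u_n\right\}
\subset\bigcap_{\kk\in K''_n}E_{\kk,n}^c.
\end{equation*}
Then $\lim_n p_n|K'_n|=\lim_n p_n|K''_n|=e^{-\tau}(1-a/b)^d=:\lambda$, verifying the mean condition. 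Finite-range dependence (conditions~\ref{cond:poi1} and~\ref{cond:poi2}) holds because $E_{\kk_1,n}$ and $E_{\kk_2,n}$ involve disjoint regions of the white noise whenever $\|\kk_1-\kk_2\|_\infty>b+1$, so the neighborhood sets $B_{\kk,n}$ have bounded cardinality depending only on $b$ and $d$. For condition~\ref{cond:poi3} I would again bound $\P[E_{\kk_1,n}\cap E_{\kk_2,n}]$ by estimating $\P[E_{\kk_1,n}\cup E_{\kk_2,n}]$ through Proposition~\ref{theo:tail_crect} applied to $\RR_{\kk_1}\cup\RR_{\kk_2}$; this is asymptotically $2p_n$, whence $\P[E_{\kk_1,n}\cap E_{\kk_2,n}]=o(n^{-d})$ uniformly, and the bounded number of neighbors makes the double sum vanish. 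Theorem~\ref{theo:poisson} then yields the common limit $e^{-\lambda}=e^{-e^{-\tau}(1-a/b)^d}$ for both intersections, and the sandwich~\eqref{eq:ccube_inclusion}-analogue finishes the lemma.

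The one genuinely new feature compared to the cube case is the product structure $(1-a/b)^d$ rather than $1-(a/b)^d$, and I expect the main point requiring care to be the single-cell integral calculation: the rectangle tangent field factorizes over coordinates (each side length ranges independently in $[a,b]$), so the integral separates into a $d$-fold product $(\int_a^b h^{-2}\,dh)^d$, and it is this independence across coordinates that turns the cube's additive exponent into the rectangle's multiplicative one. The other item to double-check is that the power of $u_n$ is $4d-1$ (from Proposition~\ref{theo:tail_crect}) rather than $2d+1$, and that the matching normalization in~\eqref{eq:def_ab_crect}, with its $(2d-\tfrac12)\log(d\log n)$ term and its $-\log(2a^d\sqrt\pi)$ constant, indeed produces the clean $e^{-\tau}/n^d$ asymptotics; verifying this consistency against Lemma~\ref{lem:extreme_value} is routine but is the place where a sign or factor error would surface. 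Everything else is a verbatim transcription of the cube argument.
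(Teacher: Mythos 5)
Your proposal is correct and follows the paper's proof essentially verbatim: the paper likewise tiles $\CRect$ into the cells $\RR_{\kk}$, computes $p_n\sim e^{-\tau}n^{-d}(1-a/b)^d$ from Proposition~\ref{theo:tail_crect} and~\eqref{eq:def_ab_crect} (your single-cell integral $(1/a-1/b)^d$ and the cancellation of the $4^d$ and $a^d$ factors check out), and then invokes Theorem~\ref{theo:poisson} exactly as in Lemma~\ref{lem:ccube1}. The only difference is that you spell out the sandwich and the verification of conditions~\ref{cond:poi1}--\ref{cond:poi3}, which the paper delegates to the cube case by reference.
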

\begin{proof}
For $\kk\in\Z^d$ let $\RR_{\kk}$ be the set of all rectangles of the form $[\xx,\xx+\hh]\in\CRect$, such that $x_i\in[k_i, k_i+1]$ and $h_i\in [a,b]$ for every $1\leq i\leq d$. Let $E_{\kk,n}$  be the random event $\{\sup_{A\in \RR_{\kk}} \XXX(A)>u_n\}$. The probability $p_n:=\P[E_{\kk,n}]$ does not depend on $\kk$ by translation invariance. By Proposition~\ref{theo:tail_crect} and~\eqref{eq:def_ab_crect},
\begin{equation}
%\P\left[E_{\kk,n}\right ]
p_n
\sim
\frac{1}{4^{d}\sqrt{2\pi}}u_n^{4d-1}e^{-u_n^2/2} \int_{\RR_{\kk}}\frac{d\xx d\yy}{\prod_{i=1}^d (y_i-x_i)^2}
\sim
\frac {e^{-\tau}} {n^d} \left(1-\frac {a}{b}\right)^d , \;\;\;n\to\infty.
\end{equation}
The same argument as in the proof of Lemma~\ref{lem:ccube1} shows that the conditions of Section~\ref{sec:poisson} are satisfied. The statement of the lemma follows from Theorem~\ref{theo:poisson} applied to the events $E_{\kk,n}$.
%Similarly, by Proposition~\ref{theo:tail_crect},
%$$
%\P[E_{\kk_1,n}\cap E_{\kk_2,n}]
%\sim \frac {e^{-\tau}} n \int_{\RR_{\kk_1}\cup\RR_{\kk_2}}\frac{d\xx d\hh}{h^2}
%=2\frac {e^{-\tau}} {n^d} \left(\frac 1a-\frac 1b\right)^d.
%$$
%Note that by the affine invariance, the probability on the left-hand side is independent %of $k\in\Z^d$.
%It follows that $\P[E_{k_1,n}\cap E_{k_2,n}]=o(1/n^d)$.
\end{proof}

%Define index sets $K'_n=[0,n-b)^d\cap \Z^d$ and $K''_n=[0,n)\cap \Z^d$. Clearly, we have
%\begin{equation}\label{eq:ccube_inclusion}
%\bigcup_{\kk\in K'_n} E_{\kk,n}\subset \left\{\sup_{A\in \CCube_n(a,b)} \XXX(A)\leq %u_n\right\}\subset \bigcup_{\kk\in K''_n} E_{\kk,n}.
%\end{equation}
%We will show that the families of random events $\{E_{\kk,n}, \kk\in K'_n\}$ and %$\{E_{\kk,n}, \kk\in K''_n\}$ satisfy the assumptions of Proposition~\ref{prop:poisson}. %First note that the events $E_{\kk,n}$ are finite-range dependent, i.e., $\E_{\kk_1,n}$ %and $\E_{\kk_2,n}$ are independent provided that $|\kk_1-\kk_2|_{\infty}>b$. By %Proposition~\ref{theo:tail_ccube}, %Corollary~\ref{theo:tail_ccube},
%$$
%\P[E_{\kk_1,n}\cup E_{\kk_2,n}]\sim \frac {e^{-\tau}} {n^d} %\int_{\RR_{\kk_1}\cup\RR_{\kk_2}}\frac{d\xx dh}{h^2}
%=
%2\frac {e^{-\tau}} {n^d} \left(\frac 1{a}-\frac 1{b}\right).
%$$
%It follows that uniformly in $\kk_1,\kk_2\in \Z^d$, we have $\P[E_{\kk_1,n}\cap %E_{\kk_2,n}]=o(1/n^d)$ as $n\to\infty$. This implies that %condition~\eqref{eq:cond_poisson} is satisfied. Note that we have $|K_n'|\sim n^d$ and %$|K_n''|\sim n^d$ as $n\to\infty$. By Proposition~\ref{prop:poisson}, we have
%\begin{equation}\label{eq:lem35}
%\lim_{n\to\infty}\P\left[\bigcup_{\kk\in K'_n} E_{\kk,n}\right]
%=
%\lim_{n\to\infty}\P\left[\bigcup_{\kk\in K''_n} E_{\kk,n}\right]
%=
%\exp\left(-e^{-\tau}\left(\frac 1{a}-\frac 1{b}\right)\right).
%\end{equation}
%The statement of the lemma follows from~\eqref{eq:ccube_inclusion} and~\eqref{eq:lem35}.

In the next lemma we will estimate the high excursion probability over the set $\CRectm_n(a,b):=\CRect_n(a,n)\backslash\CRect_n(a,b)$.
\begin{lemma}\label{lem:crect2}
%Let $u_n$ be a sequence satisfying $\lim_{n\to\infty}u_n=+\infty$.
There is a constant $C$ such that for every $0<a\leq b<n$ and $u>1$,
$$
%\lim_{h\to\infty}\limsup_{n\to\infty}
\P\left[\sup_{A\in \CRectm_n(a,b)} \XXX(A)>u \right]\leq C b^{-1}a^{-(d-1)}u^{4d-1}e^{-u^2/2}n^d.
$$
\end{lemma}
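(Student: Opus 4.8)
The plan is to mimic the dyadic-covering argument of Lemma~\ref{lem:ccube2}, but now decomposing the range of each of the $d$ side lengths separately. As there, I would first assume without loss of generality that $a=2^{L_a}$, $b=2^{L_b}$ and $n=2^{L}$ are powers of two (otherwise replace $a$ by $2^{\lfloor\log_2 a\rfloor}$, $b$ by $2^{\lfloor\log_2 b\rfloor}$ and $n$ by $2^{\lceil\log_2 n\rceil}$, which only changes the constant $C$). For a position index $\kk\in\Z^d$ and a level index $\lbold=(l_1,\ldots,l_d)\in\Z^d$, define $\RR_{\kk,\lbold}$ to be the set of rectangles $[\xx,\xx+\hh]\in\CRect$ with $h_i\in[2^{l_i},2^{l_i+1}]$ and $x_i\in[2^{l_i}k_i,2^{l_i}(k_i+1)]$ for every $1\leq i\leq d$.

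The first key step is a per-block tail bound: there is a constant $C$ such that for all $\kk,\lbold$ and all $u>1$,
$$
\P\left[\sup_{A\in\RR_{\kk,\lbold}}\XXX(A)>u\right]\leq C\,u^{4d-1}e^{-u^2/2}.
$$
This holds because the coordinatewise scaling $x_i\mapsto 2^{-l_i}x_i$ composed with a translation by $-\kk$ carries $\RR_{\kk,\lbold}$ onto the fixed reference block $\RR_0=\{[\xx,\xx+\hh]:\xx\in[0,1]^d,\ \hh\in[1,2]^d\}$, and the standardized white noise $\XXX$ is invariant in distribution under such affine maps; hence the probability does not depend on $\kk$ or $\lbold$. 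Proposition~\ref{theo:tail_crect} applied to the compact set $\RR_0$ (which has positive Jordan measure and, since all sides lie in $[1,2]$, no volume singularity) yields the asymptotic $\sim C_0\,u^{4d-1}e^{-u^2/2}$ as $u\to\infty$; this upgrades to the stated inequality for all $u>1$ because both sides are continuous and the right-hand side is bounded below on any compact $u$-interval.

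The second, combinatorial step counts how many blocks are needed to cover $\CRectm_n(a,b)=\CRect_n(a,n)\setminus\CRect_n(a,b)$. A rectangle in this set has all side lengths in $[a,n]$ and at least one side length exceeding $b$; in dyadic terms the admissible levels satisfy $L_a\leq l_i\leq L$ for every $i$, with $l_j\geq L_b$ for at least one $j$. For a fixed $\lbold$, the corners $x_i$ range over $[0,n]$ and are covered by at most $2n/2^{l_i}$ of the intervals $[2^{l_i}k_i,2^{l_i}(k_i+1)]$, so the number of blocks $\RR_{\kk,\lbold}$ at level $\lbold$ is at most $2^d n^d/2^{l_1+\ldots+l_d}$. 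Summing over the admissible $\lbold$ and bounding the constraint $\exists j:\ l_j\geq L_b$ by a union bound over $j$, each fixed $j$ contributes
$$
\Big(\sum_{l_j\geq L_b}2^{-l_j}\Big)\prod_{i\neq j}\Big(\sum_{l_i\geq L_a}2^{-l_i}\Big)=\frac{2}{b}\Big(\frac{2}{a}\Big)^{d-1},
$$
so the total number of covering blocks is at most $C\,b^{-1}a^{-(d-1)}n^d$. Combining the two steps via a union bound gives
$$
\P\left[\sup_{A\in\CRectm_n(a,b)}\XXX(A)>u\right]\leq C\,b^{-1}a^{-(d-1)}u^{4d-1}e^{-u^2/2}n^d,
$$
which is the assertion.

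I expect the main obstacle to be bookkeeping rather than anything conceptual: one must make the separate dyadic decomposition in each coordinate direction precise and, in particular, check that the lower cutoff $h_i\geq a$ supplies the convergent geometric factor $a^{-1}$ for the $d-1$ "short" directions while the single "long" direction with $h_j>b$ supplies the factor $b^{-1}$. This asymmetry between the two cutoffs is exactly what produces the weight $b^{-1}a^{-(d-1)}$ rather than a symmetric $a^{-d}$ or $b^{-d}$, and it must be tracked carefully through the union bound over the exceptional coordinate $j$.
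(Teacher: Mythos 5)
Your proposal is correct and follows essentially the same route as the paper: the same dyadic blocks $\RR_{\kk,\lbold}$, the same per-block bound $Cu^{4d-1}e^{-u^2/2}$ via affine invariance and Proposition~\ref{theo:tail_crect}, and the same covering count, with your union bound over the exceptional coordinate $j$ being exactly the paper's decomposition $\CRectm_n(a,b)=\cup_{m=1}^d\CRectm_{n,m}(a,b)$. The only cosmetic difference is that you spell out the upgrade from the asymptotic of Proposition~\ref{theo:tail_crect} to a uniform bound for all $u>1$, which the paper leaves implicit.
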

\begin{proof}
We may write $\CRectm_n(a,b)=\cup_{m=1}^d \CRectm_{n,m}(a,b)$, where $\CRectm_{n,m}(a,b)$, $1\leq m\leq d$, is the set of all rectangles $[\xx,\xx+\hh]\in \CRect_n$ such that that $h_m\geq b$ and $h_i\geq a$ for all $1\leq i\leq d$.
It suffices to estimate the high-excursion probability over the set $\CRectm_{n,1}(a,b)$. For $\kk\in\Z^d$ and $\lbold\in\Z^d$ denote by $\RR_{\kk,\lbold}$ the set of all rectangles $[\xx,\xx+\hh]\in \CRect$ such that $x_i\in [2^{l_i}k_i, 2^{l_i}(k_i+1)]$ and $h_i\in [2^{l_i},2^{l_i+1}]$ for all $1\leq i\leq d$.
By  Proposition~\ref{theo:tail_ccube}, we have for every $u>1$,
$$
\P\left[\sup_{A\in \RR_{\kk,\lbold}} \XXX(A)>u\right ]\leq  C u^{4d-1}e^{-u^2/2} \int_{\RR_{\kk,\lbold}}\frac{d\xx d\yy}{\prod_{i=1}^d (y_i-x_i)^2}<Cu^{4d-1}e^{-u^2/2}.
%=\frac {e^{-\tau}} {(n/\log n)^d}, \qquad n\to\infty.
$$
The constant $C$ does not depend on $\kk,\lbold$ since the left-hand side does not depend on $\kk,\lbold$ by the affine invariance.
To complete the proof we will show that the set $\CRectm_{n,1}(a,b)$ can be covered by at most $Cb^{-1}a^{-(d-1)}n^d$ sets of the form $\RR_{\kk,\lbold}$. Without restriction of generality we may assume that $n=2^L$, $a=2^{L_a}$, $b=2^{L_b}$ for some $L,L_a,L_b\in\Z$. For $\lbold\in\Z^d$ denote by $\CRectmm_n(\lbold)$ the set of all rectangles $[\xx,\xx+\hh]\in\CRect_n$ such that $h_i\in [2^{l_i},2^{l_i+1}]$ for all $1\leq i\leq d$. Clearly, for every fixed $\lbold\in\Z^d$ the set $\CRectmm_n(\lbold)$ can be covered by $n^d/\prod_{i=1}^d 2^{l_i}$ sets of the form $\RR_{\kk,\lbold}$ with varying $\kk$. We have
$$
\CRectm_{n,1}(a,b)= \cup_{l_1=L_b}^{L-1}\cup_{l_2=L_a}^{L-1}\ldots\cup_{l_d=L_a}^{L-1}  \CRectmm_n(\lbold).
$$
Hence, the set $\CRectm_{n,1}(a,b)$ can be covered by at most
$$
n^d\sum_{l_1=L_b}^{L-1}\sum_{l_2=L_a}^{L-1}\ldots\sum_{l_d=L_a}^{L-1} \prod_{i=1}^d 2^{-l_i}
\leq
n^d\left(\sum_{l=L_b}^{\infty}2^{-l}\right)\left(\sum_{l=L_a}^{\infty} 2^{-l}\right)^{d-1}
\leq Cb^{-1}a^{-(d-1)}n^d
$$
sets of the form $\RR_{\kk,\lbold}$.
This completes the proof of the lemma.
\end{proof}
The proof of Theorem~\ref{theo:main_crect} can be completed as follows.
Choose $u_n=u_n(\tau)$  as in~\eqref{eq:def_ab_crect}. Since $\CRect_n(a,b)\subset\CRect_n(a,n)$ for every $0\leq a<b\leq n$, it follows from Lemma~\ref{lem:crect1} that
\begin{equation}\label{eq:eq7}
\limsup_{n\to\infty}\P\left[\sup_{A\in \CRect_n(a,n)} \XXX(A)\leq u_n \right]
%\leq
%\lim_{n\to\infty}\P\left[\sup_{A\in \CRect_n(a,b)} \XXX(A)\leq u_n \right]
\leq
e^{-e^{-\tau}\left(1-\frac{a}{b}\right)^d}.
\end{equation}
The converse inequality can be proved as follows. Fix some $a>0$. By~\eqref{eq:def_ab_crect}, we have $u_n^{4d-1}e^{-u_n^2/2}\leq Cn^{-d}$. It follows from  Lemma~\ref{lem:crect2} that
$$
\P\left[\sup_{A\in \CRect_n(a,n)\backslash \CRect_n(a,b)} \XXX(A)>u_n\right]
%=
%\P\left[\sup_{A\in \cup_{m=1}^d \CRectm_n^{m}(a,b)} \XXX(A)>u_n\right]
\leq
Cb^{-1}.
$$
%Since $\CRect_n(a)\backslash \CRect_n(a,b)=\cup_{m=1}^d\CRect^m_n(a,b)$, we can chose $b$ so large that
%$$
%\P\left[\sup_{A\in \CRect_n(a)\backslash \CRect_n(a,b)} \XXX(A)>u_n \right]\leq \eps.
%$$
Consequently, by Lemma~\ref{lem:crect1} we have
\begin{align}
\liminf_{n\to\infty}\P\left[\sup_{A\in \CRect_n(a,n)} \XXX(A)\leq u_n \right]
&\geq \lim_{n\to\infty}\P\left[\sup_{A\in \CRect_n(a,b)} \XXX(A)\leq u_n \right]- Cb^{-1} \label{eq:eq8}\\
&= e^{-e^{-\tau}\left(1-\frac {a}{b}\right)^d}-Cb^{-1}.\notag
\end{align}
The proof of Theorem~\ref{theo:main_crect} is completed by letting $b\to+\infty$ in~\eqref{eq:eq7} and~\eqref{eq:eq8}.

\section{Proofs in the discrete-time case}\label{sec:proof_discr}
\subsection{Proof of Theorem~\ref{theo:main_dcube}}\label{sec:proof_dcube}
Recall that $\Cube$ is the set of all discrete $d$-dimensional cubes and $\Cube_n$ is the set of all discrete $d$-dimensional cubes contained in $\{1,\ldots,n\}^d$. For $0\leq a<b\leq n$ let $\Cube_n(a,b)$ be the set of all discrete cubes $[\xx,\xx+h]_{\Z^d}\in \Cube_n$  such that $h\in [a,b]$. We write $l_n=[\log n]$ and $q_n=1/[\log n]$.
\begin{lemma}\label{lem:dcube1}
Fix some $\tau\in\R$, $0<a<b$, and let $u_n=u_n(\tau)$ be given by~\eqref{eq:def_ab_dcube}. Then,
$$
\lim_{n\to\infty}\P\left[\max_{A\in \Cube_n(al_n,bl_n)} \frac{\SSS(A)}{\sqrt{|A|}}\leq u_n\right]=
e^{-e^{-\tau}\frac 1{J_d}\int_{a}^{b}J_d(h)dh}.
$$
\end{lemma}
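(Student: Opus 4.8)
The plan is to closely follow the proof of Lemma~\ref{lem:ccube1}, substituting the grid tail estimate of Proposition~\ref{prop:dcube} for the continuous one (Proposition~\ref{theo:tail_ccube}). The one genuinely new step is to recognize the discrete maximum as a grid-supremum of the continuous field. Realizing the i.i.d.\ array as $\xi_n=\W(Q_n)$, with $Q_n$ the unit cell attached to $n\in\Z^d$, a discrete cube $A=[\xx,\xx+h]_{\Z^d}$ satisfies $\SSS(A)=\W(\tilde A)$ and $|A|=|\tilde A|$, where $\tilde A$ is the continuous cube with corner $\xx$ and side $h+1$; hence $\SSS(A)/\sqrt{|A|}=\XXX(\tilde A)$. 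Thus $\max_{A\in\Cube_n(al_n,bl_n)}\SSS(A)/\sqrt{|A|}$ is, in distribution, the supremum of $\XXX$ over the lattice of cube-parameters $(\xx,s)\in\Z^d\times\Z$ with side $s=h+1\in[al_n+1,bl_n+1]$ and corner in (essentially) $[1,n+1]^d$. Since $\XXX$ is scale invariant, $\{\XXX(\lambda A)\}_A\eqdistr\{\XXX(A)\}_A$, I would rescale by $\lambda=q_n=1/l_n$: the parameters become the grid $q_n\Z^{d+1}$, the admissible sides become, up to an $O(q_n)$ shift, the interval $[a,b]$, and the base region becomes $\approx[0,n/l_n]^d$.

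Following Lemma~\ref{lem:ccube1}, I then tile the rescaled base region into unit cells indexed by $\kk\in\Z^d$ — the images of the $l_n$-cells in the original coordinates, so there are $N\sim(n/l_n)^d$ of them — and for each $\kk$ set $\RR_{\kk}$ to be the (compact, positive-Jordan-measure) set of cubes with corner in $\prod_i[k_i,k_i+1]$ and side in $[a,b]$, together with the event $E_{\kk,n}=\{\sup_{A\in\RR_{\kk}\cap q_n\Z^{d+1}}\XXX(A)>u_n\}$. The scaling is calibrated precisely so that Proposition~\ref{prop:dcube} applies with $\ka=2d$: since $u_n^2=2d\log n+O(\log\log n)$ one has $q_nu_n^2=u_n^2/[\log n]\to 2d$, so the relevant intensity is $J_d(h;2d)=J_d(h)$. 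By translation invariance of $\W$ the probability $p_n:=\P[E_{\kk,n}]$ is independent of $\kk$, and Proposition~\ref{prop:dcube} gives $p_n\sim\frac{1}{\sqrt{2\pi}}\left(\int_a^bJ_d(h)\,dh\right)u_n^{2d+1}e^{-u_n^2/2}$.

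A routine expansion of~\eqref{eq:def_ab_dcube} gives $u_n^2/2=d\log n+\frac12\log(d\log n)+\log\frac{(2d)^dJ_d}{\sqrt\pi}+\tau+o(1)$ and $u_n^{2d+1}\sim(2d\log n)^{d+1/2}$, whence $u_n^{2d+1}e^{-u_n^2/2}\sim\sqrt{2\pi}\,\frac{(\log n)^d}{n^d}\frac{e^{-\tau}}{J_d}$. Therefore $Np_n\to\frac{e^{-\tau}}{J_d}\int_a^bJ_d(h)\,dh=:\lambda$, which is exactly the Poisson parameter appearing in the claimed limit.

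It remains to verify the hypotheses of Theorem~\ref{theo:poisson}, which goes exactly as in Lemma~\ref{lem:ccube1}. Cubes based in cells $\kk_1,\kk_2$ with $\|\kk_1-\kk_2\|_\infty>b+1$ occupy disjoint regions, so $E_{\kk_1,n}$ and $E_{\kk_2,n}$ are independent; this gives finite-range dependence with bounded dependence sets (conditions~\ref{cond:poi1} and~\ref{cond:poi2}). Applying Proposition~\ref{prop:dcube} to $\RR_{\kk_1}\cup\RR_{\kk_2}$ yields $\P[E_{\kk_1,n}\cap E_{\kk_2,n}]=o(p_n)$, which, since each cell has boundedly many neighbors and $Np_n=O(1)$, forces condition~\ref{cond:poi3}. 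Sandwiching the event in the lemma between intersections $\cap_{\kk}E_{\kk,n}^c$ over a slightly smaller and a slightly larger index set, both of cardinality $\sim N$ with negligible boundary cells, as in~\eqref{eq:ccube_inclusion}, and invoking Theorem~\ref{theo:poisson} gives the stated limit. I expect the only delicate point to be the reduction in the first paragraph: one must check that replacing side $h$ by $h+1$, replacing $[a,b]$ by its $O(q_n)$-shift, and discarding boundary cells all wash out in the limit — harmless because $J_d(\cdot)$ is continuous and $q_n\to0$ — and, most importantly, that the natural scale $q_n=1/l_n$ is exactly the one producing $\ka=2d$, which is what ties the answer to the constant $J_d=\int_0^\infty J_d(h;2d)\,dh$ of Lemma~\ref{lem:JG_finite}.
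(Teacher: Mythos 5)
Your proposal is correct and follows essentially the same route as the paper: embed the discrete sums into the white noise so that the discrete maximum becomes a grid-supremum of $\XXX$, rescale by $l_n$ so that the mesh $q_n=1/l_n$ satisfies $q_nu_n^2\to 2d$, apply Proposition~\ref{prop:dcube} with $\ka=2d$ to each of the $\sim(n/l_n)^d$ translation-invariant cells, and conclude with Theorem~\ref{theo:poisson} exactly as in Lemma~\ref{lem:ccube1}. The only difference is that you spell out the embedding and the harmless $h$ versus $h+1$ bookkeeping, which the paper compresses into the phrase ``by the affine invariance.''
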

\begin{proof}
For $\kk\in\Z^d$ define $\DRR_{\kk,n}$ to be the set of all discrete cubes $[\xx,\xx+h]_{\Z^d}$  such that $h\in [al_n,bl_n]$ and  $x_i\in[k_il_n,(k_i+1)l_n]$ for all $1\leq i \leq d$.
Let also $\RR$ be the set of all (non-discrete) cubes $[\xx,\xx+h]$  such that $h\in [a,b]$ and  $x_i\in[0,1]$ for all $1\leq i \leq d$.
%$$
%\RR_k=q_n\Z^{d+1} \cap ([k_1,k_1+1]\times\ldots\times [k_d,k_d+1]\times [h_1,h_2]).
%$$
Let $E_{\kk,n}$ be the random event $\{\max_{A\in \DRR_{\kk,n}} \SSS(A)/\sqrt{|A|}>u_n\}$.
%Note that $p_n:=\P[E_{\kk,n}]$  does not depend on $\kk$.
Recall that $\XXX$ is the standardized white noise. Using the affine invariance, we obtain
\begin{equation}\label{eq:eq1discr}
p_n
:=
\P\left[E_{\kk,n}\right ]
=
\P\left[\max_{A\in q_n\Z^{d+1}\cap \RR} \XXX(A)>u_n\right ].
\end{equation}
Note that $\kappa:=\lim_{n\to\infty}q_n u_n^2=2d$. Proposition~\ref{prop:dcube} and~\eqref{eq:def_ab_dcube} imply that
$$
%\P\left[E_{\kk,n}\right ]
p_n
\sim
\frac 1 {\sqrt{2\pi}}u_n^{2d+1}e^{-u_n^2/2} \left(\int_{\RR}J_d(h) d\xx dh \right)
\sim
e^{-\tau}\frac {l_n^d} {n^d} \frac 1{J_d}\int_{a}^{b}J_d(h)dh,
\;\;\; n\to\infty.
$$
%By the affine invariance, the above probability is independent of $\kk\in\Z^d$.
The set $\Cube_n(al_n,bl_n)$ can be covered by approximately $n^d/l_n^d$ sets of the form $\DRR_{\kk,n}$. The statement of the lemma follows by applying Theorem~\ref{theo:poisson}. Its conditions can be verified in the same way as in the proof of Lemma~\ref{lem:ccube1}.
%In a similar way,
%$$
%\P[E_{\kk_1,n}\cup E_{\kk_2,n}]
%\sim
%\frac {e^{-\tau}} {(n/\log n)^d} \int_{\DRR_{\kk_1}\cup\DRR_{\kk_2}}G_2(h)
%=
%2\frac {e^{-\tau}} {(n/\log n)^d} \int_{a}^{b}G_2(h)dh.
%$$
\end{proof}
The proof of Theorem~\ref{theo:main_dcube} can be completed as follows. Since $\Cube_n(al_n,bl_n)\subset \Cube_n$ for every $0<a<b$ and $n$ large enough, it follows from Lemma~\ref{lem:dcube1} that
\begin{equation}\label{eq:proof_dcube0}
\limsup_{n\to\infty}\P\left[\max_{A\in \Cube_n}\frac{\SSS(A)}{\sqrt{|A|}}\leq u_n\right]\leq e^{-e^{-\tau}\frac 1{J_d}\int_{a}^{b}J_d(h)dh}.
\end{equation}
Let us prove a converse inequality. The number of elements in the finite set $\Cube_n(0,al_n)$ does not exceed $al_nn^d$. Recall that the Gaussian tail probability $\bar \Phi$ satisfies $\bar \Phi(u)\leq C u^{-1}e^{-u^2/2}$, $u>0$. We obtain
\begin{equation}\label{eq:proof_dcube1}
\limsup_{n\to\infty}\P\left[\max_{A\in\Cube_n(0,al_n)}\frac{\SSS(A)}{\sqrt{|A|}}>u_n\right]
\leq
\limsup_{n\to\infty} C (al_n n^d)\cdot(u_n^{-1}e^{-u_n^2/2})
\leq  Ca,
\end{equation}
where the last inequality is a consequence of~\eqref{eq:def_ab_dcube}. Also, it follows from Lemma~\ref{lem:ccube2} that
\begin{align}
\limsup_{n\to\infty}\P\left[\max_{A\in \Cube_n(bl_n,n)}\frac{\SSS(A)}{\sqrt{|A|}}>u_n\right]
&\leq
\limsup_{n\to\infty}\P\left[\sup_{A\in \CCube_n(bl_n,n)}\XXX(A)>u_n\right] \label{eq:proof_dcube2}\\
&\leq \limsup_{n\to\infty} C(bl_n)^{-d} u_n^{2d+1}e^{-u_n^2/2}n^d\notag\\
&\leq Cb^{-d},\notag
\end{align}
where the last step follows from~\eqref{eq:def_ab_dcube}. It follows from~\eqref{eq:proof_dcube1}, \eqref{eq:proof_dcube2} and Lemma~\ref{lem:dcube1} that
\begin{equation}\label{eq:proof_dcube3}
\liminf_{n\to\infty}\P\left[\max_{A\in \Cube_n}\frac{\SSS(A)}{\sqrt{|A|}}\leq u_n\right]\geq e^{-e^{-\tau}\frac 1{J_d}\int_{a}^{b}J_d(h)dh}-C(a+b^{-d}).
\end{equation}
By Lemma~\ref{lem:JG_finite}, $J_d=\int_{0}^{\infty} J_d(h)dh$ is finite. Hence,  $\lim_{a\downarrow 0}\lim_{b\uparrow\infty}\int_{a}^{b} J_d(h)dh=J_d$. Letting $a\downarrow 0$ and $b\uparrow \infty$ in~\eqref{eq:proof_dcube0} and~\eqref{eq:proof_dcube3}, we obtain the statement of Theorem~\ref{theo:main_dcube}.

\subsection{Proof of Theorem~\ref{theo:main_drect}}
Recall that $\Rect$ is the set of all discrete $d$-dimensional rectangles and $\Rect_n$ is the set of all discrete $d$-dimensional rectangles contained in $\{1,\ldots,n\}^d$. For $0\leq a<b\leq n$ let $\Rect_n(a,b)$ be the set of all discrete rectangles $[\xx,\xx+\hh]_{\Z^d}\in \Rect_n$ such that  $h_i\in [a, b]$ for all $1\leq i\leq d$. Recall that we write $l_n=[\log n]$ and $q_n=1/[\log n]$.
\begin{lemma}\label{lem:drect1}
Fix some $\tau\in\R$, $0<a<b$, and let $u_n=u_n(\tau)$ be defined by~\eqref{eq:def_ab_drect}.  Then,
$$
\lim_{n\to\infty}\P\left[\max_{A\in \Rect_n(al_n,bl_n)} \frac{\SSS(A)}{\sqrt{|A|}}\leq u_n \right]=e^{-e^{-\tau}\left(\frac 1 {G_d} \int_{a}^b G_d(h)dh\right)^d}.
$$
\end{lemma}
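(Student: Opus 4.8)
The plan is to mirror the proof of Lemma~\ref{lem:dcube1}, replacing the cube estimate by its rectangular analogue Proposition~\ref{prop:drect} and invoking the Poisson limit theorem (Theorem~\ref{theo:poisson}). For $\kk\in\Z^d$ I would let $\DRR_{\kk,n}$ be the collection of discrete rectangles $[\xx,\xx+\hh]_{\Z^d}$ whose side lengths satisfy $h_i\in[al_n,bl_n]$ and whose base point satisfies $x_i\in[k_il_n,(k_i+1)l_n]$ for all $1\le i\le d$, and I would set $E_{\kk,n}=\{\max_{A\in\DRR_{\kk,n}}\SSS(A)/\sqrt{|A|}>u_n\}$. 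These cells tile $\Rect_n(al_n,bl_n)$ using approximately $n^d/l_n^d$ of them, so the sought-for constant must arise as the limit of $(n^d/l_n^d)\,p_n$, where $p_n:=\P[E_{\kk,n}]$ is independent of $\kk$ by translation invariance.

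The core step is the evaluation of $p_n$. Identifying the discrete rectangles with continuous rectangles having integer corners and using the affine (scale and translation) invariance of the distribution of the standardized white noise $\XXX$, which is unchanged under $A\mapsto A/l_n$, I would rewrite $p_n=\P[\max_{A\in q_n\Z^{2d}\cap\RR}\XXX(A)>u_n]$, where $\RR\subset\CRect$ is the set of rectangles $[\xx,\xx+\hh]$ with $x_i\in[0,1]$ and $h_i\in[a,b]$. Since $q_n=1/l_n$ and $u_n^2\sim 2d\log n$, one has $\lim_{n\to\infty}q_nu_n^2=2d=:\ka$, so Proposition~\ref{prop:drect} applies and gives $p_n\sim\frac1{\sqrt{2\pi}}u_n^{4d-1}e^{-u_n^2/2}\int_{\RR}\prod_{i=1}^dG_d(h_i)\,d\xx\,d\yy$, the intensity being evaluated at $\ka=2d$. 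After the change of variables $y_i=x_i+h_i$ the integral factorizes over coordinates, each base-point integration contributing a factor $1$ and each side-length integration contributing $\int_a^bG_d(h)\,dh$, so that the integral equals $\left(\int_a^bG_d(h)\,dh\right)^d$. Feeding the explicit form~\eqref{eq:def_ab_drect} of $u_n$ into the prefactor $u_n^{4d-1}e^{-u_n^2/2}$ then yields $p_n\sim(l_n^d/n^d)\,e^{-\tau}\left(G_d^{-1}\int_a^bG_d(h)\,dh\right)^d$, whence $(n^d/l_n^d)\,p_n$ converges to the exponent appearing in the claim.

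Finally I would verify the hypotheses of Theorem~\ref{theo:poisson} exactly as in the proof of Lemma~\ref{lem:ccube1}: the events are finite-range dependent, since rectangles drawn from cells $\kk_1,\kk_2$ with $\|\kk_1-\kk_2\|_\infty>b+1$ occupy disjoint regions and hence have independent white-noise integrals, which settles conditions~\ref{cond:poi1} and~\ref{cond:poi2}; and the second-moment condition~\ref{cond:poi3} follows by applying Proposition~\ref{prop:drect} to the union region to obtain $\P[E_{\kk_1,n}\cup E_{\kk_2,n}]\sim 2p_n$, so that $\P[E_{\kk_1,n}\cap E_{\kk_2,n}]=o(p_n)=o(l_n^d/n^d)$, and summing over the $O(n^d/l_n^d)$ cells and their bounded neighborhoods gives $o(1)$. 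I expect the only genuine bookkeeping difficulty to be the constant matching in the prefactor asymptotics: one must track the power $u_n^{4d-1}$, arising from the $2d$-dimensional parameter grid $q_n\Z^{2d}$ (in contrast with the exponent $2d+1$ of the cube case), together with the precise coefficient $2^{2d-1}d^dG_d^d/\sqrt{\pi}$ and the $\left(d-\tfrac12\right)\log(d\log n)$ term in~\eqref{eq:def_ab_drect}, to confirm that all surviving logarithmic factors cancel and the limit is exactly $e^{-e^{-\tau}\left(G_d^{-1}\int_a^bG_d(h)\,dh\right)^d}$.
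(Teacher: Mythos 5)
Your proposal is correct and follows essentially the same route as the paper: the same tiling of $\Rect_n(al_n,bl_n)$ into cells $\DRR_{\kk,n}$ of linear size $l_n$, the same reduction by affine invariance to the grid $q_n\Z^{2d}$ with $\ka=\lim q_nu_n^2=2d$ so that Proposition~\ref{prop:drect} applies, the same factorization of the intensity integral into $\bigl(\int_a^bG_d(h)\,dh\bigr)^d$, and the same verification of the hypotheses of Theorem~\ref{theo:poisson} by analogy with Lemma~\ref{lem:ccube1}. The constant bookkeeping you flag does work out exactly as you describe.
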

\begin{proof}
For $\kk\in\Z^d$ let $\DRR_{\kk,n}$ be the set of discrete rectangles of the form $[\xx,\xx+\hh]_{\Z^d}$ such that $x_i\in[k_il_n, (k_i+1)l_n]$ and $h_i\in [al_n,bl_n]$ for every $1\leq i\leq d$. Let also $\RR$ be the set of all (non-discrete) rectangles of the form $[\xx,\xx+\hh]$, where $x_i\in[0, 1]$ and $h_i\in [a,b]$ for every $1\leq i\leq d$. Denote by $E_{\kk, n}$ the random event $\{\max_{A\in \DRR_{\kk,n}} \SSS(A)/\sqrt{|A|}>u_n\}$. Then, by the affine invariance
$$
p_n:=\P[E_{\kk,n}]=\P\left[\max_{A\in \RR\cap q_n\Z^{2d}} \XXX(A)>u_n\right].
$$
Note that $\kappa:=\lim_{n\to\infty}q_nu_n^2=2d$. By Proposition~\ref{prop:drect} and~\eqref{eq:def_ab_drect},
\begin{align}
p_n
&\sim
\frac{1}{\sqrt{2\pi}}u_n^{4d-1}e^{-u_n^2/2}\int_{\RR_{\kk}}\left(\prod_{i=1}^d G(y_i-x_i;2d)\right)d\xx d\hh\label{eq:eq2} \\
&=
e^{-\tau}\frac{l_n^d}{n^d} \left(\frac{1}{G_d}\int_{a}^b G_d(h)dh\right)^d,
\;\;\; n\to\infty.\notag
\end{align}
%Note that by the affine invariance, the probability on the left-hand side is independent of $\kk\in\Z^d$.
The set $\Rect_n(al_n,bl_n)$ can be covered by approximately $n^d/l_n^d$ sets of the form $\DRR_{\kk,n}$. To complete the proof, apply Theorem~\ref{theo:poisson} as in the proof of Lemma~\ref{lem:ccube1}.
%By Proposition~\ref{theo:tail_ccube},
%$$
%\P[E_{k_1,n}\cap E_{k_2,n}]\sim \frac {e^{-\tau}} n %\int_{\RR_{k_1}\cup\RR_{k_2}}\frac{dxdh}{h^2}=2\frac {e^{-\tau}} {n^d} \left(\frac %1a-\frac 1l\right).
%$$
%It follows that $\P[E_{k_1,n}\cap E_{k_2,n}]=o(1/n^d)$.
\end{proof}

In the next lemma we estimate the high-crossing probability over the set of ``thin'' rectangles. Let $\Pcoll_n(a)$ be the set of all discrete rectangles $[\xx, \xx+\hh]_{\Z^d}\in \Rect_n$ such that $h_m\leq al_n$ for some $1\leq m\leq d$.
\begin{lemma}\label{lem:drect2}
Let $u_n$ be a sequence such that $u_n\sim c\sqrt{\log n}$ as $n\to\infty$ for some $c>0$.
Then,
\begin{equation}\label{eq:pr_drect2}
\P\left[\max_{A\in \Pcoll_n(a)} \frac{\SSS(A)}{\sqrt{|A|}}> u_n\right]
\leq C a u_n^{2d-1}e^{-u_n^2/2}n^{d}.
\end{equation}
\end{lemma}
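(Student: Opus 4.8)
The plan is to mirror the dyadic covering argument used for Lemma~\ref{lem:crect2}, but to work in the discrete, rescaled regime where Proposition~\ref{prop:drect} and the function $G(h;\ka)$ of~\eqref{eq:def_G} replace the continuous tail estimate. First I would reduce to a single thin coordinate: by a union bound over the $d$ choices of the index $m$ with $h_m\leq al_n$, it suffices to bound the probability over those $A=[\xx,\xx+\hh]_{\Z^d}\in\Rect_n$ with $h_1\leq al_n$, the resulting factor of $d$ being absorbed into $C$. Rescaling every coordinate by $l_n$ and using the affine invariance of the standardized white noise exactly as in the proof of Lemma~\ref{lem:drect1}, this probability equals $\P[\max_{A\in q_n\Z^{2d}\cap\RR_n}\XXX(A)>u_n]$, where $\RR_n\subset\CRect$ is the region of rectangles contained in $[0,n/l_n]^d$ whose first side lies in $[q_n,a]$ and whose remaining sides lie in $[q_n,n/l_n]$. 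Since $u_n\sim c\sqrt{\log n}$ and $q_n=1/l_n$, we have $\ka:=\lim_{n\to\infty}q_nu_n^2=c^2\in(0,\infty)$, so Proposition~\ref{prop:drect} applies with this $\ka$.

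Next I would cover $\RR_n$ by affine dyadic boxes $\RR_{\kk,\lbold}$ as in Lemma~\ref{lem:crect2}: in each coordinate $i$ the box fixes a scale $2^{l_i}$, with position $x_i$ ranging over an interval of length $2^{l_i}$ and side $y_i-x_i\in[2^{l_i},2^{l_i+1}]$; here $l_1$ runs over the dyadic scales between $q_n$ and $a$, while each $l_i$ ($i\geq2$) runs over those between $q_n$ and $n/l_n$. The key input is a uniform version of Proposition~\ref{prop:drect}: for every box one has $\P[\sup_{A\in\RR_{\kk,\lbold}\cap q_n\Z^{2d}}\XXX(A)>u]\leq Cu^{4d-1}e^{-u^2/2}\int_{\RR_{\kk,\lbold}}\prod_{i=1}^dG(y_i-x_i;\ka)\,d\xx\,d\yy$, with $C$ independent of $\kk,\lbold,n$. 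Summing this bound over the (boundedly overlapping) boxes covering $\RR_n$ replaces the sum of box integrals by a constant multiple of the single integral over $\RR_n$, which factorizes as
$$
\left(\frac{n}{l_n}\right)^d\left(\int_{q_n}^aG(h;\ka)\,dh\right)\left(\int_{q_n}^{n/l_n}G(h;\ka)\,dh\right)^{d-1}.
$$
For the thin factor I would use that $G(\cdot;\ka)$ is bounded near $0$: since $F(\ka/h)\sim h/\ka$ as $h\downarrow0$ by~\eqref{eq:def_Fa1}, one has $G(h;\ka)=h^{-2}F^2(\ka/h)\to\ka^{-2}$, whence $\int_{q_n}^aG(h;\ka)\,dh\leq Ca$; for each remaining factor I would use the finiteness $\int_0^\infty G(h;\ka)\,dh<\infty$, proved exactly as in Lemma~\ref{lem:JG_finite} (now with $\ka=c^2$), so that $\int_{q_n}^{n/l_n}G(h;\ka)\,dh\leq C$.

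Putting these together bounds the probability by $Ca\,u_n^{4d-1}e^{-u_n^2/2}(n/l_n)^d$. Finally, from $q_nu_n^2\to c^2$ and $q_n=1/l_n$ one gets $(n/l_n)^d=(q_n n)^d\sim(c^2/u_n^2)^d\,n^d$, which turns $u_n^{4d-1}(n/l_n)^d$ into $Cu_n^{2d-1}n^d$ and yields~\eqref{eq:pr_drect2}. The main obstacle is the uniform per-box estimate: unlike in Lemma~\ref{lem:crect2}, the smallest-scale boxes (where a side occupies only $O(1)$ grid points) are genuinely discrete and degenerate, so the continuous Proposition~\ref{theo:tail_crect}, whose intensity $\prod(y_i-x_i)^{-2}$ diverges there, cannot be used and a naive union bound overcounts by a factor of order $(n/\log n)^{d-1}$. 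The point is that after rescaling a box of scale $2^{l_i}$ to unit size the effective tangent-grid spacing in coordinate $i$ becomes $\ka/2^{l_i}$, which is precisely the argument $\ka/h$ appearing in $G(h;\ka)$; thus the single reference-box estimate underlying Proposition~\ref{prop:drect} (equivalently Theorem~\ref{theo:locstatdiscr}) supplies the constant $C$ uniformly across all scales, while the boundedness of $G(\cdot;\ka)$ at $0$ (the absence of a Pickands enhancement for degenerate thin boxes, which produces the factor $a$) together with its integrability at $\infty$ keeps the total box integral finite.
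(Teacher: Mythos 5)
Your route is genuinely different from the paper's, and it has a gap at exactly the point you flag as ``the main obstacle.'' The entire argument rests on a \emph{uniform, non-asymptotic} version of Proposition~\ref{prop:drect}: a bound $\P[\sup_{A\in\RR_{\kk,\lbold}\cap q_n\Z^{2d}}\XXX(A)>u]\leq Cu^{4d-1}e^{-u^2/2}\int_{\RR_{\kk,\lbold}}\prod_i G(y_i-x_i;\ka)$ with $C$ independent of the scale vector $\lbold$ and of $n$. But Proposition~\ref{prop:drect} (via Theorem~\ref{theo:locstatdiscr}) is an asymptotic equivalence for a \emph{fixed} compact reference set and a \emph{fixed} limit $\ka=\lim qu^2$; after rescaling a box of scale $2^{l_1}$ to unit size the effective grid parameter becomes $\approx\ka/2^{l_1}$, which for the thinnest admissible boxes ($2^{l_1}\asymp q_n$, i.e.\ sides occupying $O(1)$ lattice points) tends to $+\infty$ with $n$, and for the thickest ones tends to $0$. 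So you are invoking the estimate uniformly over an unbounded range of effective $\ka$'s, including the degenerate regime where the ``box'' is one or two lattice points wide and the locally-self-similar machinery in that coordinate is vacuous. The heuristic that $F(\ka)\sim 1/\ka$ as $\ka\to\infty$ encodes the correct crossover to independent-points behavior is sound, but turning it into a constant $C$ valid for all $\lbold$ simultaneously is a substantial uniform discrete Pickands-type estimate that neither the paper nor the results it cites provide; as written, this step is asserted rather than proved.

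The paper sidesteps this entirely by inducting on the dimension. It writes $\Pcoll_n(a)=\cup_m\Pcoll_{n,m}(a)$ and decomposes each $\Pcoll_{n,m}(a)$ into at most $al_nn$ slabs $\Pcollm_{n,m}(x,h)$ obtained by fixing the $m$-th side of the rectangle; over each slab the standardized field is equal in law to the $(d-1)$-dimensional standardized noise over $\RectDim_n$ (the sum over the fixed $m$-th side, renormalized, is again i.i.d.\ Gaussian noise in the remaining coordinates). The induction hypothesis, upgraded to the global bound~\eqref{eq:pr_drect1} in dimension $d-1$ by combining~\eqref{eq:pr_drect2} with Lemma~\ref{lem:crect2} at scale $a=b=l_n$, then gives $Cu_n^{2d-3}e^{-u_n^2/2}n^{d-1}$ per slab; multiplying by $al_nn\leq Cau_n^2n$ yields the claim. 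This uses only estimates already established (the continuous Lemma~\ref{lem:crect2} and the one-dimensional Gaussian tail for the base case) and produces the factor $a$ from the slab count rather than from an integral of $G$ near $0$. If you want to salvage your covering approach, you would need to prove the uniform-in-$\ka$ per-box bound separately (essentially a two-regime argument: the clumping estimate for $\ka/2^{l_1}$ bounded, and a plain union bound over grid points when $\ka/2^{l_1}$ is large); the paper's induction is the cleaner way to avoid that.
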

\begin{proof}
We will prove~\eqref{eq:pr_drect2} by induction over the dimension $d\in\N$. If $d=1$, then~\eqref{eq:pr_drect2} follows from the Gaussian tail estimate $\bar \Phi(u_n)\leq Cu_n^{-1}e^{-u_n^2/2}$ and the fact that $|\Pcollone_n(a)|\leq al_nn$. Before proceeding further, let us show that~\eqref{eq:pr_drect2} implies that
\begin{align}
\P\left[\max_{A\in \Rect_n} \frac{\SSS(A)}{\sqrt{|A|}}> u_n\right]
&\leq C u_n^{2d-1}e^{-u_n^2/2}n^{d}.\label{eq:pr_drect1}
%\P\left[\max_{A\in \Pcoll_{n}(a)} \frac{\SSS(A)}{\sqrt{|A|}}> u_n\right]
%&\leq Ca u_n^{2d-1}e^{-u_n^2/2}n^{d}.\label{eq:pr_drect1}
\end{align}
%Let us prove~\eqref{eq:pr_drect1} for $d=1$.
Let $\Rect_n(a,n)$ be the set of all discrete rectangles $[\xx,\xx+\hh]_{\Z^d}\in\Rect_n$ such that $h_i\geq a$ for all $1\leq i\leq d$, and recall that $\CRect_n(a,n)$ is the set of non-discrete rectangles $[\xx,\xx+\hh]\in\CRect_n$ such that $h_i\geq a$ for all $1\leq i\leq d$. Taking $a=b=l_n$ in Lemma~\ref{lem:crect2}, we obtain
\begin{equation*}%\label{eq:pr_drect4}
\P\left[\max_{A\in \Rect_n(l_n,n)} \frac{\SSS(A)}{\sqrt{|A|}}>u_n \right]
\leq
\P\left[\sup_{A\in \CRect_n(l_n,n)} \XXX(A)>u_n \right]
\leq
Cu_n^{2d-1}e^{-u_n^2/2}n^d.
\end{equation*}
Noting that $\Rect_n=\Pcoll_n(1)\cup \Rect_n(l_n,n)$, we see that~\eqref{eq:pr_drect2} implies~\eqref{eq:pr_drect1}.

Now, assume that~\eqref{eq:pr_drect2} and, consequently, \eqref{eq:pr_drect1} have been established in the $(d-1)$-dimensional setting. We may write $\Pcoll_n(a)=\cup_{m=1}^d \Pcoll_{n,m}(a)$, where $\Pcoll_{n,m}(a)$, $1\leq m\leq d$, is the set of all discrete rectangles $[\xx, \xx+\hh]_{\Z^d}\in \Rect_n$ such that $h_m\leq al_n$.
Consider the set $\Pcollm_{n,m}(x, h)$ of all discrete rectangles $[\xx,\yy]_{\Z^d}\in\Rect_n$ such that $x_m=x$ and $h_m=h$. The set $\Pcoll_{n,m}(a)$ can be written as a union of at most $al_n n$ sets of the form $\Pcollm_{n,m}(x, h)$. An easy inspection shows that as long as $x+h\leq n$, we have the following equality of laws  of random fields:
$$
\left\{\frac{\SSS(A)}{\sqrt{|A|}}, A\in \Pcollm_{n,m}(x, h)\right\}
\eqdistr
\left\{\frac{\SSS(A)}{\sqrt{|A|}}, A\in \RectDim_n\right\}.
$$
By the induction assumption, Eqn.~\eqref{eq:pr_drect1} holds in the $d-1$-dimensional setting. Hence,
$$
\P\left[\max_{A\in \Pcoll_{n,m}(a)} \frac{\SSS(A)}{\sqrt{|A|}}\leq u_n\right]
\leq
al_nn\cdot \P\left[\max_{A\in \RectDim_n} \frac{\SSS(A)}{\sqrt{|A|}}\leq u_n\right]
\leq
Cu_n^{2d-1}e^{-u_n^2/2}n^{d}.
$$
 Summing over $1\leq m\leq d$ establishes~\eqref{eq:pr_drect2} in the $d$-dimensional setting and completes the proof.
%Eqn.~\eqref{eq:pr_drect2} follows from~\eqref{eq:pr_drect4}.
%We have by Lemma~\ref{lem:crect2},
%\begin{align*}
%\P\left[\sup_{A\in \Rect_n(al_n,n)} \frac{\SSS(A)}{\sqrt{|A|}}>u_n \right]
%&\leq
%\P\left[\sup_{A\in \CRect_n(al_n,n)} \XXX(A)>u_n \right]\\
%&\leq
%Cn^d (al_n)^{-d}u_n^{4d-1}e^{-u_n^2/2}.
%\end{align*}
\end{proof}

We are now in position to complete the proof of Theorem~\ref{theo:main_drect}.
For every $0\leq a<b$, we have by Lemma~\ref{lem:drect1},
\begin{align}
\limsup_{n\to\infty}\P\left[\max_{A\in \Rect_n} \frac{\SSS(A)}{\sqrt{|A|}}\leq u_n \right]
&\leq
\lim_{n\to\infty}\P\left[\max_{A\in \Rect_n(al_n,bl_n)} \frac{\SSS(A)}{\sqrt{|A|}}\leq u_n \right]\label{eq:pr_drect3} \\
&=
e^{-e^{-\tau}\left(\frac 1{G_d}\int_{a}^b G_d(h)dh\right)^d}.\notag
\end{align}
We prove a converse inequality.  By Lemma~\ref{lem:crect2},
\begin{align}
\P\left[\max_{A\in \Rect_n(al_n,n)\backslash \Rect_n(al_n,bl_n)} \frac{\SSS(A)}{\sqrt{|A|}}> u_n \right]
&\leq
\P\left[\sup_{A\in \CRectm_n(al_n,bl_n)} \XXX(A)> u_n \right]\label{eq:pr_drect5}\\
&\leq
C b^{-1}a^{-(d-1)}l_n^{-d}u_n^{4d-1}e^{-u_n^2/2}n^d\notag\\
&\leq C b^{-1}a^{-(d-1)},\notag
\end{align}
where the last inequality follows from~\eqref{eq:def_ab_drect}.
Note that $\Rect_n\backslash \Rect_n(al_n,bl_n)=\Pcoll_n(a)\cup (\Rect_n(al_n,n)\backslash \Rect_n(al_n,bl_n))$.  Hence, Lemma~\ref{lem:drect2}  and~\eqref{eq:pr_drect5} imply that
\begin{equation}\label{eq:pr_drect6}
\P\left[\max_{A\in \Rect_n\backslash \Rect_n(al_n,bl_n)} \frac{\SSS(A)}{\sqrt{|A|}}> u_n \right]
\leq
 C (b^{-1}a^{-(d-1)}+a).
\end{equation}
It follows from Lemma~\ref{lem:drect1} and~\eqref{eq:pr_drect6} that
\begin{align}
\liminf_{n\to\infty}\P\left[\max_{A\in \Rect_n} \frac{\SSS(A)}{\sqrt{|A|}}\leq u_n \right]
%\geq
%\lim_{n\to\infty}\P\left[\max_{A\in \Rect_n(al_n,bl_n)} \frac{\SSS(A)}{\sqrt{|A|}}\leq u_n \right]-
%C (b^{-1}a^{-(d-1)}+a)\\
\geq
e^{-e^{-\tau}\left(\frac 1 {G_d} \int_{a}^b G_d(h)dh\right)^d}-C (b^{-1}a^{-(d-1)}+a).\label{eq:pr_drect7}
\end{align}
Letting $a\downarrow 0$ and $b\uparrow \infty$ in~\eqref{eq:pr_drect3} and~\eqref{eq:pr_drect7} in such a way that $b^{-1}a^{-(d-1)}\to 0$, we complete the proof of Theorem~\ref{theo:main_drect}.

\section*{Acknowledgement}
The author is grateful to Axel Munk for useful discussions on the topic of the paper.

\bibliographystyle{plainnat}
\bibliography{paper1Abib}
\end{document}